\newenvironment{proof}{\medskip                    %% Proof
\noindent{\scshape Proof:}}{\quad $\square$
\medskip}  %%
\newtheorem{theorem}{Theorem}[section]
\newtheorem{proposition}{Proposition}[section]
\newtheorem{corollary}{Corollary}[section]
\newtheorem{example}{Example}[section]
\def\Sat{\operatorname{Sat}}
\def\digr{{\mathcal D}}
\def\B{\mathbb{B}}
\def\tplus{\oplus}
\begin{document}

\title{$(K,L)$-eigenvectors in max-min algebra}

\author[rvt]{Martin Gavalec\fnref{fn1}}
\ead{martin.gavalec@uhk.cz}

\author[rvt]{Zuzana N\v{e}mcov\'{a}\fnref{fn1}}
\ead{zuzana.nemcova@uhk.cz}

\author[rvt2]{Serge{\u\i} Sergeev\corref{cor}\fnref{fn2}}
\ead{sergiej@gmail.com}

\address[rvt]{University of Hradec Kr\'{a}lov\'{e}/Faculty of Informatics and Management, Rokitansk\'{e}ho 62,
Hradec Kr\'{a}lov\'{e} 3, 500 03, Czech Republic}
\address[rvt2]{University of Birmingham, School of Mathematics, Edgbaston B15 2TT}

\cortext[cor]{Corresponding author. Email: sergiej@gmail.com}
\fntext[fn1]{Supported by the Czech Science Foundation grant \#18-01246S}

\fntext[fn2]{Supported by EPSRC grant EP/P019676/1}

%%%%%%%%%%%%%%%%%%%%%%%%%%%%%%%%%%%%%%%%%%%%%%%%%%%%%%%%%%%%%%%%%%%%%%%%
%%%                         ABSTRACT
%%%%%%%%%%%%%%%%%%%%%%%%%%%%%%%%%%%%%%%%%%%%%%%%%%%%%%%%%%%%%%%%%%%%%%%%
\begin{abstract}
Using the concept of $(K,L)$-eigenvector, we investigate the structure of the max-min eigenspace associated with a given eigenvalue of a matrix in the max-min algebra (also known as fuzzy algebra). In our approach, the max-min eigenspace is split into several regions according to the order relations between the eigenvalue
and the components of $x$. The resulting theory of $(K,L)$-eigenvectors, being based on
the fundamental results of Gondran and Minoux, allows to describe the whole max-min eigenspace
explicitly and in more detail.
\end{abstract}

%%%%%%%%%%%%%%%%%%%%%%%%%%%%%%%%%%%%%%%%%%%%%%%%%%%%%%%%%%%%%%%%%%%%%%%%
%%%                         KEYWORDS
%%%%%%%%%%%%%%%%%%%%%%%%%%%%%%%%%%%%%%%%%%%%%%%%%%%%%%%%%%%%%%%%%%%%%%%%
\begin{keyword}
max-min, fuzzy algebra, eigenvector \vskip0.1cm {\it{AMS
Classification:}} 15A80, 15A18
%15A80 Max-plus and related algebras, 15A06: Linear equations, 15A18: Eigenvalues, singular values and eigenvectors.
\end{keyword}

\maketitle

\section{Introduction}\label{s:introduction}
%---------------------------------------------------------------------------------------------

By max-min algebra we mean the unit interval $\B=<0,1>$ equipped with the
arithmetic operations of "addition" $a\oplus b=\max(a,b)$ and
"multiplication" $a\otimes b=\min(a,b)$. Algebraically speaking, max-min algebra is a semiring where
both arithmetic operations are idempotent. Let us also note that, algebraically, max-min algebra is an example of
incline algebra of~\cite{CKR:84}.

The arithmetic operations $\oplus$ and $\otimes$ can be extended
to matrices and vectors in the usual way so that for any matrices $A=(a_{ij})$ and
$B=(b_{ij})$ of appropriate dimensions we can define their "sum" $A\oplus B$ and
"product" $A\otimes B$ by the usual rules:
$(A\oplus B)_{ij}=a_{ij}\oplus b_{ij}$ and $(A\otimes B)_{ij}=\bigoplus_{k} a_{ik}\otimes b_{kj}$.
For a square matrix $A\in\B^{n\times n}$ we can also define its max-min matrix powers: $A^k=\underbrace{A\otimes\ldots\otimes A}_k$, where $k$ is a natural number. Note that $A^0=I$, the usual identity matrix.
Further we will systematically omit the product sign $\otimes$, for brevity.

As usual in tropical/idempotent algebras, to each matrix $A\in\B^{n\times n}$ we
associate a digraph $\digr(A)=(N,E)$ with the node set $N=\{1,\ldots,n\}$ and edge set
$E=\{(i,j)\colon a_{ij}>0\}$. Each edge has a weight $a_{ij}$.
A sequence of edges $(i_0,i_1),(i_1,i_2),\ldots, (i_{k-1},i_k)$ where each edge belongs
to $E$ is called a walk whose length is $k$ and whose weight is given by the max-min product
$a_{i_0i_1}\dots a_{i_{k-1}i_k}$.

One of the motivations to study max-min algebra comes from the theory of fuzzy sets where the operation
$\min(a,b)$ is one of the most useful examples of triangular norms, see Klement, Mesiar and Pap~\cite{KMP:00}. See also Gondran and Minoux~\cite{GM-07} for more on semirings and other algebraic models relevant to the theory of fuzzy sets.

\if{
It is easy to see that each entry $(A^k)_{ij}$ is the greatest max-min weight of a walk connecting
node $i$ to node $j$ and having length $k$. This gives rise to the following example of algebraic optimal path problem~\cite{Car-71,LM-98} to which the theory of max-min matrix powers can be applied. Suppose that
we are given a network of roads with one bridge on each road. The capacity of each bridge
(i.e., the biggest weight of a vehicle that can go over it) is given. The capacity of a sequence of such roads with bridges (i.e., a walk in this network) is then determined as the minimal capacity of the bridges in it.
The problem then is, given a starting point and an end point and (possibly) the number of bridges to be
passed, to find a sequence of roads with the greatest capacity.
}\fi

%The main goal of the present paper is to develop a new approach to the max-min eigenproblem.
The main goal of the present paper is to further investigate the structure of the max-min eigenspace associated with a given eigenvalue. 
For a given matrix $A\in\B^{n\times n}$ and a number $\lambda\in\B$, the max-min eigenspace associated with $\lambda$ is
the set of vectors $x\in\B^{n\times 1}$ (called $\lambda$-eigenvectors) such that
\begin{equation}
\label{e:eig}
A  x=\lambda  x.
\end{equation}
%The set of these vectors forms a max-min $\lambda$-eigenspace of $A$. 
Note that it is indeed
a space in the sense of max-min algebra: for any $\alpha,\beta\in\B$ and any $u$ and $v$
satisfying~\eqref{e:eig} $\alpha u\oplus\beta v$ also satisfies this equation. Let us now give some examples to motivate the study of this eigenproblem and our approach to it.

\begin{example}[Medical symptoms and therapy]
{\rm One of the first models, in which the max-min eigenvectors were used, was concerned with searching invariants in therapeutic recommendations. It was put forward by Sanchez~\cite{San-78}, see also a monograph by Rakus-Andersson~\cite{Rak:07} for a more recent account of this method and more references. In this model, the matrix $A\in\B^{n\times n}$  expresses a fuzzy relation describing the relative success of treating $n$ symptoms of some illness by some drug. In this matrix, a diagonal entry $a_{ii}$ is equal to the recovery rate from the $i$th symptom, and entry $a_{ij}$, for any $i\neq j$ is, the rate at which either both symptoms $i$ and $j$ are absent after the application of drug, or symptom $i$ is absent and symptom $j$ is still present. In other words, $a_{ij}$ is the rate at which ``the action of the drug is equal or stronger on the $i$th symptom than on the $j$th symptom''~\cite{Rak:07}, designed to account for the link between the symptoms and action of the drug on them.
The max-min eigenvectors $x$ associated with eigenvalue $1$ appear as {\em eigen fuzzy sets}. Note that in this application, one is usually interested in the greatest left eigenvector of $A$, i.e., the greatest $x$ such that $xA=x$. This vector, together with some other vector, is then used to estimate reliable intervals for the rate at which a drug removes the $n$ symptoms. 

The greatest max-min eigenvector is easy to use, as it can be quickly found by a simple iterative procedure~\cite{San-81,Rak:07}. However, it can be seen as giving an overly optimistic judgement, and in this situation one may be interested  1) to impose a condition that the recovery rate is no bigger than $\lambda$ and 2) to require that certain components of vector $x$, while being below $\lambda$, stay at their initial level $x_i$. A further development of this idea in the context of medical treatments is beyond the scope of the present paper.} 
\end{example}

The greatest left max-min eigenvectors of a given matrix $A$ with entries in the interval $[0,1]$ were also used by Nobuhara et al.~\cite{NBH-06} in image reconstruction, where a given image was encoded by means of max-min and min-max eigenvectors of $A$, as well as fixed points of various convex combinations of the max-min and min-max products associated with $A$, and then successfully reconstructed using them.

\begin{example}[Security in a computer framework]
{\rm This example was put forward as a motivation for studying strongly tolerant interval eigenvectors in max-min algebra~\cite{GPP-19}. A computer network consisting of servers $S_1,\ldots, S_n$, data storage units $D_1,\ldots, D_n$ and a logical unit $L$  is considered. Lines $K_{ij}$, for $i,j\in I$, connect $S_i$ with $D_j$, while lines $L_j$ connect every $D_j$ with $L$. The security level of each line in the network is measured by values in the real interval $[0,1]$, where value $0$ stands for completely unsecure connection and value $1$ stands for completely secure connection. The security of 
every $K_{ij}$ (data security) is denoted by $a_{ij}$, and the security of $L_j$ (logic security) is denoted by $x_j$. The maximal security level $y_i$ of information transfer from servers $S_i$ to the logical unit $L$ through the given storage units is the $i$th component of the vector $y=A\otimes x$. 
If we wish to keep the data security unchanged by  the information transfer from servers $S_i$ to the logical unit $L$ through the given storage units, then $y = x$ should hold. 
Moreover, we have to consider the security level of the system itself, which depends on the accessible technologies and the available budget. The technological security of the system can decrease the security of the processed data below the level $\lambda$, for some $\lambda\in \langle 0,1\rangle$, and then $\lambda x$ becomes new target level of security. Note that $A (\lambda x)=\lambda x$ in max-min algebra, which means that the invariance of the new levels of security will be ensured. If we additionally require that the new levels of security should become exactly equal to $\lambda$ in some components and stay equal to $x_i$ in the others, then we are led to study $(K,L)$- eigenvectors, introduced and studied in the present paper.}  
\end{example}

Problem~\eqref{e:eig} has been studied in max-min algebra at least since Gondran and Minoux~\cite{Gon-76,GM-78} and Sanchez~\cite{San-78}. The approach
taken in~\cite{Gon-76,GM-78} resembles that of max-plus algebra, where eigenspaces are characterized as particular subspaces of the column span of Kleene star. There is also a number of works where a different approach
is taken. Sanchez~\cite{San-78,San-81} is focused on the largest eigenvector associated with the eigenvalue $1$ and suggests an algorithm that can be used to compute it in practice.
In the same vein, Cechl\'arov\'a~\cite{cechlarova1992} describes lower and upper basic eigenvectors in terms of the associated graph.
The structure of the eigenspace of increasing max-min eigenvectors $x_1\leq x_2\leq\ldots\leq x_n$ in max-min algebra is described by Gavalec \cite{G-02}, and
various types of max-min interval eigenvectors have been studied in Gavalec et al.~\cite{GPT-14}.

Gondran and Minoux obtained fundamental results for~\eqref{e:eig} also over more general semirings
with idempotent multiplication, see~\cite{GM:08}[Section 6.3] for one of the latest accounts. We are going to
use these results.
However, we observe that the theory as presented in that monograph describes only the
solutions whose all components are less than or equal to $\lambda$ \cite{GM:08}[Ch. 6, Corollary 3.5].
Several examples when~\eqref{e:eig}
also admits other solutions can be found in the present paper. To describe those other solutions
we adopt an approach which is similar to that of Gavalec et al.~\cite{GNS-15}, where the eigenproblem in max-{\L}ukasiewicz algebra was studied (see also Gavalec and N\v{e}mcov\'a~\cite{GN-17}).  Namely, given $A$ and $\lambda$ we
consider a partition of $N=\{1,\ldots,n\}$ into two disjoint subsets $K$ and $L$ such that
$K\cup L=N$ and pose a problem of describing all $\lambda$-eigenvectors $x$ that satisfy $x_i\leq \lambda$ and
hence $\lambda x_i=x_i$ for all
$i\in K$ and $x_i\geq\lambda$ and hence $\lambda x_i=\lambda$ for all $i\in L$. When $K=N$ we call
such vectors "principal $\lambda-$eigenvectors" since~\eqref{e:eig} becomes $Ax=x$, and when $L=N$ we call such vectors
"background $\lambda$-eigenvectors", in analogy with~\cite{GNS-15}. In the latter case~\eqref{e:eig} becomes
$Ax=\lambda\mathbf{1}$, where $\mathbf{1}$ denotes the vector of all $1$'s. principal eigenvectors were described
in~\cite{GM:08}[Ch. 6, Corollary 3.5], which we revisit here in Corollary~\ref{c:principal}. Background
eigenvectors are easy to obtain: see Proposition~\ref{p:backgr} and Proposition~\ref{p:lambdaboxgen} below.
Principal and background $\lambda$-eigenvectors are fundamental for describing the $(K,L)-$eigenvectors associated with $\lambda$ in the case of general $K$, $L$ and $\lambda$. Their description is stated in Theorem~\ref{t:mainres}, which can be considered as our main result.

Theorem~\ref{t:mainres} also yields a method for constructing a generating set for the whole $\lambda$-eigenspace, although the computational effort may grow exponentially with matrix size.
First of all, note that, for general $K$ and $L$, the set of $(K,L)-$eigenvectors is not a max-min space any more, since the set $\{x\colon x_i\geq \lambda\}$ is not a max-min space. However, it is a {\em max-min convex set}: if $x$ and $y$ are $(K,L)$-eigenvectors associated with $\lambda$ and $\alpha\oplus\beta=1,$ then $\alpha x\oplus \beta y$ is also such a $(K,L)$-eigenvector. This follows since (like in the usual convexity) any max-min space is a max-min convex set, and the sets defined by max-min affine inequalities such as $x_i\leq \lambda$ or $x_i\geq\lambda$ are also max-min convex. For arbitrary vectors $x^1,\ldots, x^n\in\B^n$ one can consider their {\em max-min convex combinations}: $\alpha_1 x^1\oplus\ldots\oplus \alpha_n x^n$ where $\alpha_1\oplus\ldots\oplus \alpha_n=1$ is required, and the set of all such combinations (for given $x^1,\ldots, x^n$) is called the {\em max-min convex hull} of these points.
Some max-min convex sets are max-min convex hulls of a finite number of points. The easiest and most important example is $\B^n$, which is the max-min convex hull of the zero vector $\mathbf{0}$ and unit vectors $e^i$, whose $i$the component is equal to $1$ and the rest of the components are equal to $0$. 
Theorem~\ref{t:mainres} will provide for a comprehensive description of any set of $(K,L)$-eigenvectors as a max-min convex hull of a finite number of points. Then we can take the union of these sets and obtain a set of points whose max-min convex hull is the whole $\lambda$-eigenspace. Taking out the zero vector $\mathbf{0}$, we then obtain a generating set for the whole 
$\lambda$-eigenspace. See, e.g., Nitica and Sergeev~\cite{NS-14,NS-15} for more on max-min convexity. 

All new results of this paper are obtained in Section~\ref{s:mainres}.
Preliminary notions and results from Gondran and Minoux~\cite{GM:08} and Butkovi\v{c} et al.~\cite{BSS-12}, which provide the necessary algebraic tools, are given in Section~\ref{s:tropical}.

\section{Some problems of max-min algebra}
\label{s:tropical}
%---------------------------------------------------------------------------------------------

In this section we will give some necessary notions and facts from max-min algebra
on which our study of max-min $(K,L)$-eigenvectors will be based.
We will start with defining the notions of metric matrix and Kleene star and
(following Gondran and Minoux~\cite{GM:08}) giving a description of the set of principal eigenvectors ($x$ such that
$Ax=x$). This will be followed by describing the solution set to max-min Bellman ($Z$-matrix) equation
(following Butkovi\v{c} et al. \cite{BSS-12} or Krivulin \cite{Kri:09}) and solving a special type of
max-min equation~\eqref{e:Axb} in Subsection~\ref{s:special}.

\subsection{Metric matrix, Kleene star and the principal eigenproblem}
%---------------------------------------------------------------------------------------------
For a square matrix $A\in\B^{n\times n}$ let us define its metric matrix $A^+=(a^+_{ij})_{i,j=1}^n$ and
Kleene star $(a^*_{ij})_{i,j=1}^n$ by the following series:
\begin{equation}
\begin{split}
A^+&=A\oplus A^2\oplus A^3\oplus\ldots\\
A^*&=I\oplus A\oplus A^2\oplus\ldots
\end{split}
\end{equation}
where $I$ denotes the identity matrix, whose diagonal entries are equal to $1$ and off-diagonal 
entries are $0$.
It is well-known that in max-min algebra these series always converge and, moreover, can be truncated:
\begin{equation}
\begin{split}
A^+&=A\oplus \ldots\oplus A^n,\\
A^*&=I\oplus A\oplus\ldots\oplus A^{n-1}.
\end{split}
\end{equation}
%
%The following properties of metric matrix and Kleene star are well-known,
%see~\cite{GM:08}[Chapter 6]:
%\begin{equation}
%\label{kls-prop}
%\begin{split}
%A^*&=AA^*\oplus I=A^*A\oplus I,\\
%A^+&=AA^*=A^*A.
%\end{split}
%\end{equation}
%
In terms of the associated graph, $a^+_{ij}$, being equal to $a^*_{ij}$ when $i\neq j$,
is the maximal (max-min) weight of all walks connecting $i$ to $j$ with unrestricted
length. 
%In terms of the network with bridges, this is the maximal capacity of
%all sequences of roads connecting $i$ to $j$. 
So is the {\em optimal walk interpretation}
of metric matrices and Kleene stars.

Metric matrices, Kleene stars and associated digraphs provide
some of the basic tools for the max-min eigenproblem.
Let us start with the {\em principal eigenproblem}: the problem of
identifying all vectors $x$ that satisfy $Ax=x$ for
a given matrix $A$. Such vectors are called {\em principal eigenvectors}
of $A$.

For each principal eigenvector $x$,
following the terminology of~\cite{BCOQ},
define its {\em saturation graph} $\Sat(A,x)$ as the graph
consisting of all edges $(i,j)$ that satisfy $a_{ij}\otimes x_j=x_i$ and all nodes on these edges. This graph in general has several maximal strongly connected components, and let $C(A,x)$ denote a subset of $N=\{1,\ldots,n\}$ that contains one node from each strongly connected component of $\Sat(A,x)$. We now state a description of the set of principal eigenvectors, which is %essentially
due to~\cite{GM:08}. %The proof is also given here, for convenience of the reader.
\begin{theorem}[Gondran-Minoux~\cite{GM:08} Section 6.3]
\label{t:principal}
The set of principal ei\-genvectors is a max-min space generated by
vectors
\begin{equation}
\label{e:genset-princ}
a_{ii}^+ (A^*)_{\cdot i},\quad i=1,\ldots,n.
\end{equation}
More precisely, each vector of~\eqref{e:genset-princ} is a principal eigenvector,
and each principal eigenvector $x$ can be represented as
\begin{equation}
\label{e:repr-princ}
x=\bigoplus_{i\in C(A,x)} x_i a_{ii}^+ (A^*)_{\cdot i}.
\end{equation}
\end{theorem}

\if{
\begin{proof}
We first show that each vector in~\eqref{e:genset-princ} is a principal eigenvector.
We have:
\begin{equation}
\label{e:cross-eig}
A(a_{ii}^+ (A^*)_{\cdot i})=a_{ii}^+ (A^+)_{\cdot i},
\end{equation}
and we need to show that $a_{ii}^+ (A^*)_{\cdot i}=a_{ii}^+ (A^+)_{\cdot i}$.
By their definition, matrices $A^*$ and $A^+$ differ only on the diagonal and therefore
$a_{ii}^+ a^*_{ki}=a_{ii}^+ a^+_{ki}$ for $k\neq i$, and for $k=i$ we have
$a_{ii}^+=(a_{ii}^+)^2$ by the idempotency of multiplication. This implies
$a_{ii}^+ (A^*)_{\cdot i}=a_{ii}^+ (A^+)_{\cdot i}$.

We now take an arbitrary principal eigenvector $x$. Since $Ax=x$ we also have
$A^kx=x$ for any $k\geq 1$ and therefore also $A^*x=x$, adding up all these equalities
and using the idempotency of $\oplus$.
Writing this in terms of columns of $A^*$ we have
\begin{equation}
\label{e:repr-simple}
x=\bigoplus_{i=1}^n x_i (A^*)_i.
\end{equation}
To show that this is the same as~\eqref{e:repr-princ} we first take $i\in C(A,x)$. Since $i$ is on
a cycle of $\Sat(A,x)$, for some $i_1,\ldots, i_k$ where $i=i_1$ we have that
$$a_{i_1i_2}x_{i_2}=x_{i_1},\, a_{i_2i_3}x_{i_3}=x_{i_2},\,\ldots,\,a_{i_ki_1}x_{i_1}=x_{i_k}.$$
These equations imply that
$$a_{i_1i_2}a_{i_2i_3}\cdot\ldots\cdot a_{i_ki_1}x_{i_1}=x_{i_1}$$
and hence $a_{ii}^+x_i\geq x_i$, which is equivalent to $a_{ii}^+ x_i= x_i$. Therefore
$x_i$ can be replaced in~\eqref{e:repr-simple} with $x_i a_{ii}^+$, for such $i$.

Since each node has an outgoing edge in $\Sat(A,x)$, for each $j\notin C(A,x)$ there exists a
walk in $\Sat(A,x)$ connecting it to a (maximal) strongly connected component of $\Sat(A,x)$
and hence a walk connecting to a node $i\in C(A,x)$, which is a sequence
$i_1,\ldots, i_k$ with $j=i_1$ and $i=i_k$ such that
$$a_{i_1i_2}x_{i_2}=x_{i_1},\, a_{i_2i_3}x_{i_3}=x_{i_2},\,\ldots,\,a_{i_{k-1}i_k}x_{i_k}=x_{i_{k-1}}.$$
These equations imply that
$$a_{i_1i_2}\cdot\ldots\cdot a_{i_{k-1}i_k}x_{i_k}=x_{i_1}.$$
Using this we obtain that for arbitrary index $\ell$
\begin{equation*}
\begin{split}
a^*_{\ell j}x_j=a^*_{\ell i_1}x_{i_1}=a^*_{\ell i_1} a_{i_1i_2}\cdot\ldots\cdot a_{i_{k-1}i_k}x_{i_k}\leq a^*_{\ell i_k} x_k=
a^*_{\ell i} x_i
\end{split}
\end{equation*}
where the last inequality is due to the optimal walk interpretation of Kleene star.
Hence we have shown that for each $j\notin C(A,x)$ there exists $i\in C(A,x)$ with
$a^*_{\ell j}x_j\leq a^*_{\ell i}x_i$. Hence we can omit the terms in~\eqref{e:repr-simple} where
$i\notin C(A,x)$, while the terms for which $i\in C(A,x)$ can be multiplied by $a_{ii}^+$.
This shows~\eqref{e:repr-princ} and hence the whole claim.
\end{proof}
}\fi

%\subsection{Bellman equation}
%---------------------------------------------------------------------------------------------
We will also use the theory of algebraic Bellman equation
\begin{equation}
\label{e:bellman}
x=Ax\tplus b,
\end{equation}
studied over general semirings, e.g., in Carr\'e \cite{Car-71}, Litvinov and Maslov \cite{LM-98}. In nonnegative linear algebra this equation is
also known as $Z$-matrix equation~\cite{BSS-12}.

Although~\eqref{e:bellman} has been known for decades, the following fundamental
result, describing the whole solution set to~\eqref{e:bellman}, 
was formulated only recently in Butkovi\v{c} et al. and Krivulin~\cite{BSS-12,Kri-06,Kri:09}.
%The solution set of~\eqref{e:bellman} is described as follows.

%will be denoted by $S(A,b)$ . A short proof that is
%close to the one given in~\cite{GNS-15} is presented for reader's convenience.

\begin{theorem}
\label{t:schneider}
Let $A\in\B^{n\times n}$ and $b\in\B^n$. Equation~\eqref{e:bellman}
always has 
solutions, and the set of these solutions is
\begin{equation}
\label{sab-descr}
\{A^*b\tplus v\colon Av=v\}.
\end{equation}
\end{theorem}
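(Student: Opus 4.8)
The plan is to prove Theorem 2.2 by a standard two-part set equality argument, showing that the claimed set \eqref{sab-descr} both consists of solutions to \eqref{e:bellman} and exhausts all solutions. The key algebraic facts I would rely on are the idempotency of $\oplus$ and the Kleene-star identity $A A^* \tplus I = A^*$, equivalently $A A^* = A^+$ and $A^* = I \tplus A^+$, which hold in any idempotent semiring where the star series converges (as guaranteed for max-min algebra in the excerpt).

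First I would verify that every element of \eqref{sab-descr} solves \eqref{e:bellman}. Take $x = A^* b \tplus v$ with $Av = v$. Then I compute
\begin{equation*}
A x \tplus b = A(A^* b \tplus v) \tplus b = (A A^* b) \tplus (A v) \tplus b = A^+ b \tplus v \tplus b.
\end{equation*}
Using $A^+ \tplus I = A^*$ (so $A^+ b \tplus b = A^* b$) and $Av = v$, this collapses to $A^* b \tplus v = x$. Hence $x$ is a solution.

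Next I would show every solution has the claimed form. Suppose $x = Ax \tplus b$. Iterating the equation — substituting the right-hand side into itself $k$ times — gives $x = A^k x \tplus (A^{k-1} \tplus \dots \tplus A \tplus I) b$ for every $k$. Summing these identities over $k = 0, 1, \dots, n-1$ and invoking idempotency of $\oplus$ together with the truncation $A^* = I \tplus A \tplus \dots \tplus A^{n-1}$, I obtain $x = A^* x \tplus A^* b$. The natural candidate is then to set $v := x$ and check it is a principal eigenvector, but this is \emph{not} automatic and is where the real work lies. The clean route is to put $v := A^* x$: one shows $A v = v$ because $A A^* x = A^+ x \leq A^* x = v$ gives $Av \leq v$, while the reverse inequality follows from the structure of the iterated equation, so $v$ is indeed a principal eigenvector. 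It then remains to verify $x = A^* b \tplus v$, which should follow by combining $x = A^* b \tplus A^* x$ with the definition of $v$ and a short manipulation using $A^* A^* = A^*$ (idempotency of the Kleene star).

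\textbf{The main obstacle} I anticipate is the second inclusion — specifically, producing the correct principal eigenvector $v$ and proving $Av = v$ rather than merely $Av \leq v$. In an idempotent semiring one only gets the inequality $A A^* x \leq A^* x$ for free, and establishing equality $A(A^*x) = A^*x$ requires genuinely using that $x$ itself satisfies the Bellman equation (not just any vector). I would handle this by exploiting $x = Ax \tplus b \geq Ax$, which propagates to $x \geq A^* x$ hence $x = A^* x \tplus A^* b$ forces $A^* x$ to absorb into the fixed-point structure; care must be taken to confirm that the resulting $v$ satisfies the eigen-equation exactly, and this delicate point is presumably where the cited references~\cite{BSS-12,Kri:09} do the decisive semiring-theoretic computation that I would invoke or reproduce.
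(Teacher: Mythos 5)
Your first inclusion is correct and matches the paper's own verification. The gap is in the second inclusion: your candidate $v := A^*x$ does not, in general, satisfy $Av=v$. From $x = Ax\oplus b$ you get $Ax\le x$, hence $A^+x\le x$ and therefore $A^*x = x\oplus A^+x = x$; consequently $A(A^*x)=Ax$, and $Av=v$ would force $Ax=x$, which need not hold. A concrete counterexample: take $A$ to be the zero matrix and $b\ne\mathbf{0}$; then $x=b$ is the unique solution of \eqref{e:bellman}, $A^*x=x=b$, but $A(A^*x)=\mathbf{0}\ne b$. The ``reverse inequality'' you hope to extract from the iterated equation simply is not there --- $AA^*x\le A^*x$ can be strict --- so the argument as written does not close, and no choice of the form $A^kx$ with \emph{fixed} $k$ independent of $x$ will work either.

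The paper closes this gap by an argument specific to max-min algebra (this is exactly the decisive step you defer to the references). From $Ax\le x$ the orbit $x\ge Ax\ge A^2x\ge\cdots$ is non-increasing, and every entry of every iterate $A^kx$ is drawn from the finite set of values $\{a_{ij}\}\cup\{x_i\}$, because $\max$ and $\min$ only ever select among existing entries. Hence two iterates coincide, and monotonicity then forces the orbit to stabilize: $v:=A^kx$ satisfies $Av=v$ for all sufficiently large $k$. Combining this with $x=A^kx\oplus A^*b$ (valid for $k\ge n$) gives $x=A^*b\oplus v$ with $Av=v$, as required. If you want to salvage your route, replace $v=A^*x$ by this stabilized iterate; the rest of your outline then goes through unchanged.
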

\if{
\begin{proof} First it can be verified that any vector like on the r.h.s.
of~\eqref{sab-descr} satisfies $x=Ax\tplus b$, using that
$A^*=AA^*\tplus I$ \eqref{kls-prop}. In particular, a solution of~\eqref{e:bellman} always
exists since $A^*$ always converges in the max-min case.

Iterating the equation $x=Ax\tplus b$ we obtain
\begin{equation*}
\begin{split}
x=Ax\tplus b= &A(Ax\tplus b)\tplus b=\ldots\\
=& A^k x \tplus (A^{k-1}\tplus\ldots\tplus I)b= A^k x\tplus A^*b
\end{split}
\end{equation*}
for all $k\geq n$.

This implies $x\geq A^*b$.  Further, $x$ satisfies $Ax\leq x$, hence
$x\geq Ax\geq\ldots\geq A^kx\geq\ldots$.

In max-min algebra, the orbit $\{A^kx\}_{k\geq 1}$ always starts to cycle
from some $k$. Indeed, in this algebra every entry of $A^kx$ for any $k$ is an entry of $A$ or an entry of $x$, and therefore there exist $k_1$ and $k_2$ for which $A^{k_1}x=A^{k_2}x$. But as $Ax\leq x$, we have $x\geq Ax\geq\ldots\geq
A^kx\geq\ldots$, and it is only possible that the sequence
$\{A^kx\}_{k\geq 1}$ stabilizes starting from some $k$. That is,
starting from some $k$, vector $v=A^k x$ satisfies $Av=v$. The proof
is complete.
\end{proof}
}\fi

\subsection{Special type of equation} \label{s:special}
%---------------------------------------------------------------------------------------------
We also need to describe the solution set for the system

\begin{equation}
\label{e:Axb}
A z \oplus b =\lambda \mathbf{1}.
\end{equation}
where $A\in\B^{m\times n}$, $z\in\B^{n}$ and $b\in\B^m$ (for arbitrary natural numbers $m$ and $n$).

%Below we describe the solution space for this system and the minimal set of its gnerators.
%Solutions $(z_{\Tilde{L}},z_K)$ are then to be substituted back in~\eqref{e:SKL-rep}.
%\newpage
We will study this system under the condition that all coefficients of $A$ and $b$ are less than or equal to $\lambda$:
\begin{equation}
\label{e:leqlambda}
a_{ij}\leq\lambda,\quad b_i\leq\lambda\quad\forall i,j\in N.
\end{equation}
The description will be obtained in terms of coverings and minimal coverings,
following the known solution method for systems of the form $A\otimes x=b$ and $A\otimes x\geq b$ in max-plus and
max-min algebra (see, e.g.,~Butkovi\v{c} \cite{But:10}, Cunninghame-Green and Cechl\'arov\'a \cite{CC-95}, and Elbassioni~\cite{Elb-08}).

Note that if the system~\eqref{e:Axb} is solvable,
then in every row of the equation, we have to obtain $\lambda$
either from $Az$ or from $b$.
\vspace{3pt}

Let us denote $I_0=\{i\colon b_i<\lambda\}$ and $C_j=\{i \in I_0 \colon a_{ij}=\lambda\}$ for $j\in N$. \\[3pt]
Furthermore, for $W\subseteq N$  we denote $C^W$ and  $z^W$   by putting
\begin{align}
C^W     &= \bigl\{  C_j \colon j\in W  \bigr\}\\[3pt]
z^W_j   &= \left\{
        \begin{array}{ll}
            \lambda \quad & \text{if } \ j \in W \\
            0 & \text{otherwise}
        \end{array}
    \right.
		\label{e:CWzW}
\end{align}
We say that $C^W$ is a {\em covering} of $I_0$, if $I_0 = \bigcup C^W(=\bigcup_{j\in W} C_j)$. Moreover,  $C^W$ is a {\em minimal covering} of $I_0$, if
\begin{equation}
\label{e:min_covering}
I_0 \ne \bigcup C^V\, \text{ for every }\, V\subseteq W,\, V\ne W.
\end{equation}
\begin{proposition}
\label{p:solution_xW}
$z\in\B^n$ is a solution of \eqref{e:Axb} with condition~\eqref{e:leqlambda}
if and only if $z\geq z^W$ for some $W\subseteq N$ such that  $C^W$ is a minimal covering of $I_0$.
\end{proposition}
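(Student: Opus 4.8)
The plan is to reduce the equation \eqref{e:Axb} to a purely combinatorial covering condition, exploiting the boundedness hypothesis \eqref{e:leqlambda}. First I would observe that under \eqref{e:leqlambda} every summand satisfies $\min(a_{ij},z_j)\leq a_{ij}\leq\lambda$ and $b_i\leq\lambda$, so that $(Az)_i\oplus b_i\leq\lambda$ holds automatically in every row $i$. Consequently \eqref{e:Axb} is equivalent to requiring $(Az)_i\oplus b_i\geq\lambda$, i.e. equality in each row. For rows $i\notin I_0$ we have $b_i=\lambda$, so those rows hold regardless of $z$; the whole content of the system therefore sits in the rows $i\in I_0$, for which $b_i<\lambda$ forces $(Az)_i=\lambda$.

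Next I would rewrite the single-row condition $(Az)_i=\lambda$ for $i\in I_0$. Since $(Az)_i=\max_j\min(a_{ij},z_j)$ with each term at most $\lambda$, this holds iff some index $j$ attains $\min(a_{ij},z_j)=\lambda$, which by $a_{ij}\leq\lambda$ means $a_{ij}=\lambda$ and $z_j\geq\lambda$. In the notation of the proposition this reads $i\in C_j$ for some $j$ in the active set $\{j\colon z_j\geq\lambda\}$. Thus, writing $W'=\{j\colon z_j\geq\lambda\}$, a vector $z$ solves the system iff $I_0\subseteq\bigcup_{j\in W'}C_j$; and since each $C_j\subseteq I_0$, this is the same as $C^{W'}$ being a covering of $I_0$.

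For the \emph{if} direction, suppose $z\geq z^W$ with $C^W$ a minimal covering. Then $z_j\geq\lambda$ for all $j\in W$, so $W\subseteq\{j\colon z_j\geq\lambda\}$. A direct computation gives $(Az^W)_i=\max_{j\in W}a_{ij}$, which equals $\lambda$ for each $i\in I_0$ because $C^W$ covers; monotonicity of $\min$ and $\max$ then yields $(Az)_i\geq(Az^W)_i=\lambda$, and the automatic upper bound forces $(Az)_i=\lambda$ on $I_0$, while rows outside $I_0$ are satisfied by $b_i=\lambda$. Here the role of \eqref{e:leqlambda} is exactly to guarantee that enlarging $z$ beyond $z^W$ cannot push any row above $\lambda$.

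For the \emph{only if} direction, let $z$ be a solution and set $W'=\{j\colon z_j\geq\lambda\}$; by the previous paragraphs $C^{W'}$ is a covering of $I_0$. It need not be minimal, so I would prune it, repeatedly discarding any index whose removal still leaves a covering until reaching a set $W\subseteq W'$ from which no single element can be dropped. The one mild subtlety — the step I expect to need a brief argument — is that this single-removal minimality coincides with the proposition's definition \eqref{e:min_covering} via arbitrary proper subsets: if $V\subsetneq W$ and $j\in W\setminus V$, then $\bigcup C^V\subseteq\bigcup C^{W\setminus\{j\}}\subsetneq I_0$, so $C^V$ fails to cover, confirming minimality of $C^W$. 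Finally $W\subseteq W'$ gives $z_j\geq\lambda=z^W_j$ for $j\in W$ and $z_j\geq 0=z^W_j$ otherwise, i.e. $z\geq z^W$, completing the proof.
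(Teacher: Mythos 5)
Your proof is correct and follows essentially the same route as the paper's: both directions reduce row $i\in I_0$ to finding $j$ with $a_{ij}=\lambda$ and $z_j\geq\lambda$, identify the active set of such $j$ with a covering of $I_0$, and prune to a minimal one. You are somewhat more explicit than the paper about the automatic upper bound $(Az)_i\oplus b_i\leq\lambda$ and about why pruning $W'$ to a minimal $W$ preserves $z\geq z^W$, but these are elaborations of the same argument rather than a different approach.
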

\begin{proof}
Suppose  $z\in\B^n$ is a solution of \eqref{e:Axb}. Then $b_i< \lambda$ implies $(A_i)z = \lambda$.
Then, also using~\eqref{e:leqlambda}, we obtain that for every $i\in I_0$ there is $j=j(i)\in N$ with $a_{ij(i)}= \lambda$ and $z_{j(i)} \geq \lambda$. Denoting $W= \{j(i) \colon i\in I_0 \}$ , we get $I_0 = \bigcup C^W$ and $z\geq z^W$. Without loss of generality, we can assume that the covering $C^W$ of $I_0$ is a minimal one.

For the converse implication, suppose $z\geq z^W$ for some $W\subseteq N$, with $C^W$ being a minimal covering of $I_0$.
Then, for any $i\in I_0$, there is $j= j(i) \in W$ such that $i\in C_{j(i)}$. That is, $z_{j(i)}\geq z^W_{j(i)}= \lambda$ and $a_{ij(i)}= \lambda$, which gives $(A_i)z = \lambda$.
\end{proof}
\begin{corollary}
\label{c:Axb}
Equation~\eqref{e:Axb} with condition~\eqref{e:leqlambda} is solvable if and only if
there exists a covering $C^W$ of $I_0$.
In this case, solution set to~\eqref{e:Axb} can be represented as $\bigcup_W S^W$ where the union is taken over
$W\subseteq N$ such that $C^W$ is a minimal covering of $I_0$ and
\begin{equation}
\label{e:SW}
\begin{split}
S^W:&=\{z\colon z^W\leq z\leq\mathbf{1}\}\\
&=\{z\colon \lambda \leq z_k \leq 1 \ \text{for $k\in W$},\  z_k\geq 0\ \text{for
$k\in N \setminus W$}\},
\end{split}
\end{equation}
\end{corollary}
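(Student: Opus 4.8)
The plan is to derive both assertions directly from Proposition~\ref{p:solution_xW}, which already characterizes the solutions as exactly those $z$ that dominate some $z^W$ with $C^W$ a minimal covering of $I_0$. Almost nothing new has to be proved; the work is to translate that characterization into the two statements of the corollary.

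For the solvability criterion, I would first observe that a minimal covering of $I_0$ exists if and only if \emph{some} covering of $I_0$ exists. One direction is immediate, since every minimal covering is in particular a covering. For the converse, starting from any covering $C^W$ I would repeatedly discard an index $j\in W$ whose removal still leaves a covering; because $N$ is finite this reduction terminates and produces a subset $V\subseteq W$ with $C^V$ a minimal covering. Combining this with Proposition~\ref{p:solution_xW}, which guarantees a solution precisely when some minimal covering exists, yields that~\eqref{e:Axb} is solvable if and only if a covering of $I_0$ exists. The degenerate case $I_0=\emptyset$ is handled by $W=\emptyset$: then $C^\emptyset$ is (vacuously) a minimal covering and $z^\emptyset=\bzero$ is indeed a solution, since~\eqref{e:leqlambda} forces $b_i=\lambda$ for all $i$ and hence $A\bzero\oplus b=b=\lambda\mathbf{1}$.

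For the representation of the solution set, I would invoke Proposition~\ref{p:solution_xW} in the form: $z\in\B^n$ is a solution if and only if $z\geq z^W$ for some $W$ with $C^W$ a minimal covering. The elementary point is that for $z\in\B^n$ the bound $z\leq\mathbf{1}$ holds automatically, as every component lies in $\B=\langle0,1\rangle$. Consequently the condition $z\geq z^W$ is equivalent to $z^W\leq z\leq\mathbf{1}$, that is, to $z\in S^W$. Gathering these solutions over all admissible $W$ then gives exactly $\bigcup_W S^W$, the union taken over those $W$ for which $C^W$ is a minimal covering of $I_0$.

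Since this is essentially a bookkeeping reformulation of Proposition~\ref{p:solution_xW}, I do not anticipate a genuine obstacle. The only points needing a little care are the termination of the reduction from an arbitrary covering to a minimal one, and the automatic upper bound $z\leq\mathbf{1}$, which is what lets one express the solution set as a union of the order intervals $S^W$ rather than as a union of unbounded up-sets.
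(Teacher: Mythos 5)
Your proposal is correct and follows exactly the route the paper intends: the corollary is stated without proof as an immediate consequence of Proposition~\ref{p:solution_xW}, and your two bookkeeping observations (every covering contains a minimal covering by finite reduction, and $z\leq\mathbf{1}$ is automatic in $\B$) are precisely what is needed to pass from the proposition to the stated form, including the degenerate case $I_0=\emptyset$ which the paper also notes separately.
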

Note that if $I_0=\emptyset$ then the minimal covering of $I_0$ is $C^W$ with $W=\emptyset$ and
the unique minimal (and hence the least) solution of~\eqref{e:Axb} is $\mathbf{0}$ and
the solution set is $\{z\colon \mathbf{0}\leq z\leq\mathbf{1}\}$.

 Set $S^W=\{z\colon z^W\leq z\leq\mathbf{1}\}$
can be also algebraically expressed as follows.
\begin{equation}
\label{e:SWM}
S^W=\{z^W\oplus  v_{N}\colon v_{N}\in\B^{|N|}\}
\end{equation}

\begin{example}\normalfont
We shall illustrate the solution of \eqref{e:Axb} by the following system:
\begin{equation*}
A=
\begin{pmatrix}
.3 & .5 & .3\\
.6 & .6 & .2\\
.6 & .3 & .6
\end{pmatrix},
\ b=
\begin{pmatrix}
.6 \\
.3 \\
.2
\end{pmatrix}
\end{equation*}
and consider $\lambda=.6$.

Then the set $I_0=\{2,3\}$. From the entries of $A$ we can derive sets $C_1=\{2,3\},C_2=\{2\}$ and $C_3=\{3\}$.

For $W=\{1\}$, we have $C^W=\{C_1\}$. This set is a minimal covering of $I_0$, the minimal solution here is $z^W=z^{\{1\}}=(.6,0,0)$ and every $z\geq z^{\{1\}}$ is also a solution, i.e. $S^{\{1\}}=\{z^{\{1\}}\oplus v\colon v\in\B^{3}\}$.

Similarly, for $W=\{2,3\}$ we have the set $C^{W}=\{C_2,C_3\}$ which is minimal covering of $I_0$ with minimal solution $z^W= z^{\{2,3\}}=(0,.6,.6)$. Again, also vectors $z\geq z^{\{2,3\}}$ are solutions to the system, i.e. $S^{\{2,3\}}=\{z^{\{2,3\}}\oplus v\colon v\in\B^{3}\}$

Final solution set for the system \eqref{e:Axb} is then represented by the union of particular solution sets, $\bigcup_W S^W$.
\end{example}

%%%%%%%%%%%%%%%%%%%%%%%%%%%
\section{Max-min eigenproblem}
\label{s:mainres}
%---------------------------------------------------------------------------------------------
Let us first consider some two-dimensional examples that show
how the solution to the max-min eigenproblem~\eqref{e:eig} splits into 
subsets of $(K,L)$-eigenvectors.

%That means two-dimensional max-min $(K,L)$-eigenspaces (where $K$ ranges over all
%subsets of $\{1,2\}$ and $L$ is the complement of $K$).
%
\begin{example}\normalfont
\label{ex:1}
Take
\begin{equation}
A=
\begin{pmatrix}
.7 & .3\\
.2 & .5
\end{pmatrix}
\end{equation}
and consider $\lambda=.5$. Then the solution of \eqref{e:eig} is equivalent to the system
\begin{equation}
\label{e:first}
\max \bigl( \min (.7,x_1), \min(.3,x_2)\bigr) =\min(.5,x_1),
\end{equation}
\begin{equation}
\label{e:second}
\max \bigl( \min (.2,x_1), \min(.5,x_2)\bigr) =\min(.5,x_2)
\end{equation}
\if{
These two equations can be also written as
\begin{equation}
\label{e:first}
(.7 \wedge x_1) \vee (.3 \wedge x_2) = .5 \wedge x_1
\end{equation}
\begin{equation}
\label{e:second}
(.2 \wedge x_1) \vee (.5 \wedge x_2) = .5 \wedge x_2
\end{equation}
}\fi
\if{
Particular solution, the solution set for the equation \eqref{e:first} can be written as
\begin{equation}
\begin{split}\label{e:first-sol}
X^{\prime}=\Bigl\{ (x_1,x_2)\colon
               &(x_1=0.5 ) \ \vee
               (x_1 < 0.5 \ \wedge\ (0.3 \wedge x_2=x_1 ) \ \vee\ \\
          \vee\ &(x_1 < 0.5 \ \wedge\ x_1\leq 0.7 \ \wedge\ (0.3 \wedge x_2\leq x_1))
 \Bigr\}.
\end{split}
\end{equation}
Second particular solution, the solution set of \eqref{e:second} is
\begin{equation}
\begin{split}\label{e:second-sol}
X^{\prime\prime}=\Bigl\{ (x_1,x_2)\colon
               &(x_2\geq 0.5 ) \ \vee
               (x_2 < 0.5 \ \wedge\ (0.2 \wedge x_1=x_2 ) \ \vee\ \\
         \vee\ &(x_2 < 0.5 \ \wedge\ x_2\leq 0.5 \ \wedge\ (0.2 \wedge x_1\leq x_2))
\Bigr\}.
\end{split}
\end{equation}
}\fi
The solution set for~\eqref{e:first} is
\begin{equation}
\label{e:first-sol}
X^{\prime}=\Bigl\{(x_1,x_2)\colon
((x_1\geq .3)\vee (x_1\geq x_2))\wedge (x_1\leq .5)
\Bigr\},
\end{equation}
and the solution set for~\ref{e:second} is
\begin{equation}
\label{e:second-sol}
X^{\prime\prime}=\Bigl\{(x_1,x_2)\colon
(x_2\geq .2)\vee (x_1\leq x_2)\}
\Bigr\}.
\end{equation}
Then, the solution set to the eigenproblem is $X^{\prime} \bigcap X^{\prime\prime}$. Sets $X^{\prime}$, $X^{\prime\prime}$ and their intersection are displayed in Figure \ref{f:ex1}.

\begin{figure}
 \begin{center}
  % Requires \usepackage{graphicx}
\hspace{-10pt}
  \includegraphics[width=\textwidth]{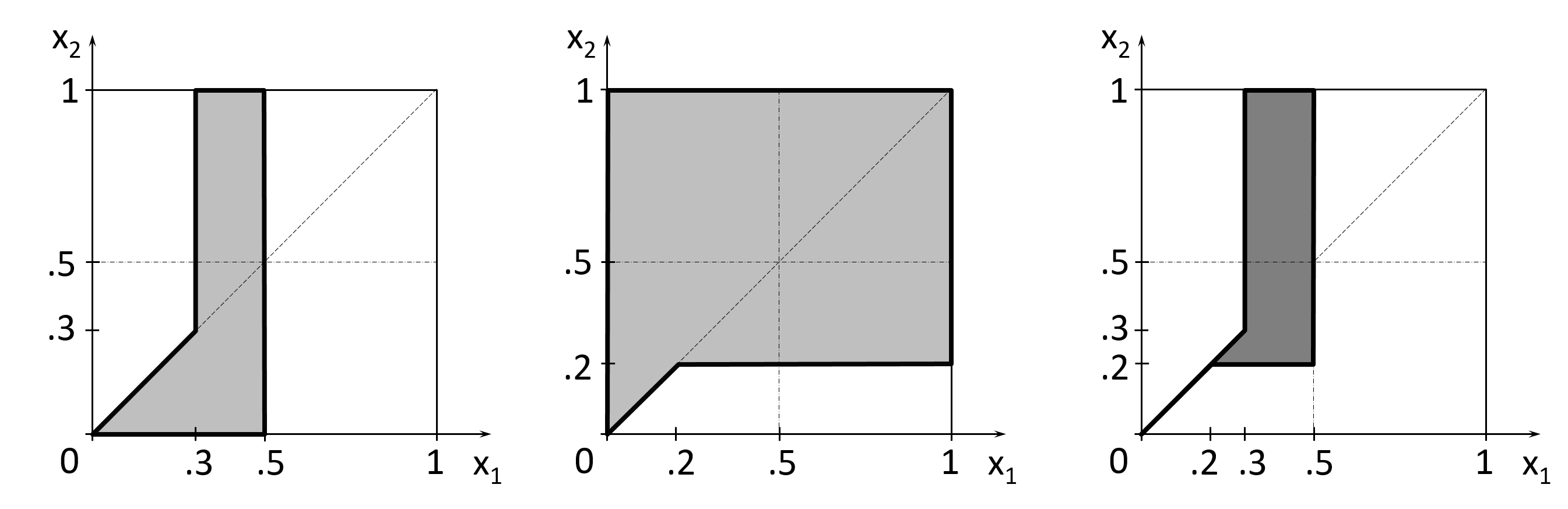}
  \caption{Sets $X^{\prime}$, $X^{\prime\prime}$ and their intersection: the $0.5$-eigenspace of $A$}\label{f:ex1}
  \end{center}
\end{figure}

In Figure \ref{f:exKL} we observe the effect of the value $\lambda$ on the final solution set.
The eigenvectors can be thus studied in individual areas (subsets) defined by $\lambda$.
The boundaries of these areas represented by the dashed line divide the solution set of our two-dimensional example into four areas (in figure quadrants $Q_1-Q_4$).

For the eigenvectors in $Q_1$ it holds that all $x_i\geq\lambda$ and thus we say that all $i\in L$ and $K=\emptyset$. We call these eigenvectors the background eigenvectors of $A$.
For $Q_2$ and $Q_3$ it holds that $x_i\leq\lambda$ for some $i \in K$ and some $x_i\geq\lambda$ for some $i \in L$.
In $Q_4$ all $x_i\leq \lambda$, it means that all $i \in K$ and $L=\emptyset$. We call these vectors the principal eigenvectors of $A$.

Note that in example \ref{ex:1} we have some ``genuine'' $(K,L)$-eigenvectors in the interior of $Q_2$,
which are neither principal nor background eigenvectors.

\begin{figure}
 \begin{center}
  % Requires \usepackage{graphicx}
\hspace{-10pt}
  \includegraphics[height=120pt]{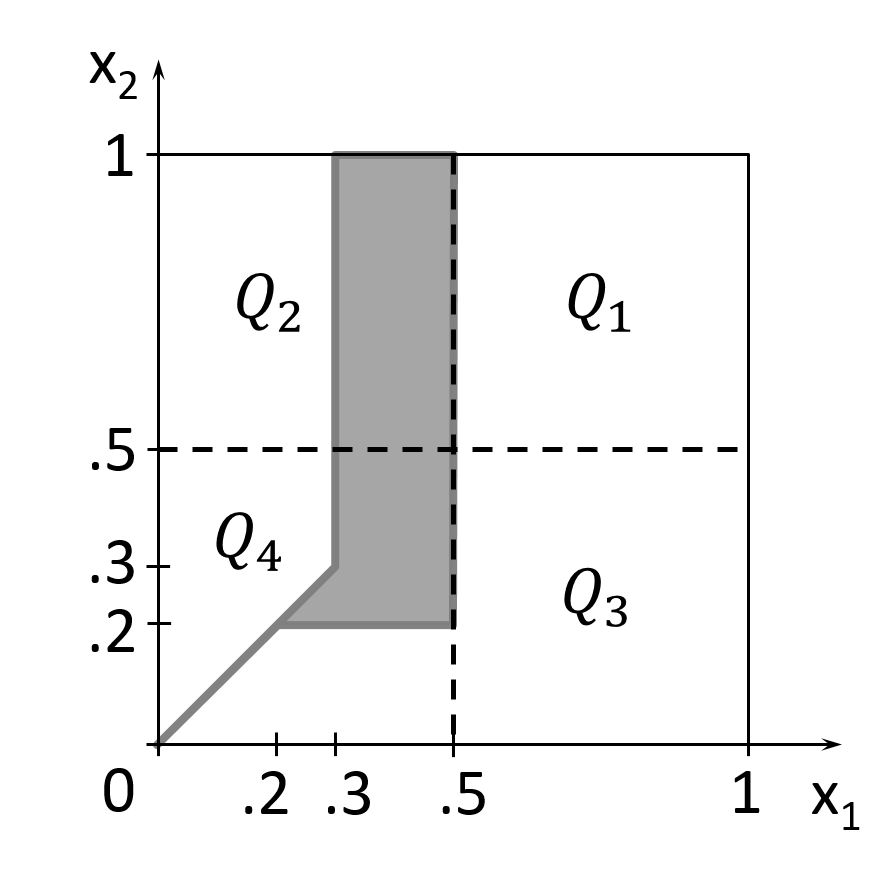}
  \caption{$(K,L)$-eigenvectors}\label{f:exKL}
  \end{center}
\end{figure}
\end{example}
\begin{example}\normalfont
\label{ex:2}
In this example, we take
\begin{equation}
B=
\begin{pmatrix}
.4 & .5\\
.2 & .5
\end{pmatrix}
\end{equation}
and consider the same $\lambda=.5$.
To solve \eqref{e:eig} for $A=B$ we need to solve the system
\begin{equation}
\label{e:firstB}
\max \bigl( \min (.4,x_1), \min(.5,x_2)\bigr) =\min(.5,x_1),
\end{equation}
\begin{equation}
\label{e:secondB}
\max \bigl( \min (.2,x_1), \min(.5,x_2)\bigr) =\min(.5,x_2)
\end{equation}
Note that~\eqref{e:secondB} is the same as~\eqref{e:second}.
The solution set for~\eqref{e:firstB} is
\begin{equation}
\label{e:first-solB}
Y^{\prime}=\Bigl\{(x_1,x_2)\colon
((x_1=x_2\leq .5)\vee (x_2\leq x_1\leq .4) \vee ((x_1\geq .5)\wedge (x_2\geq .5))
\Bigr\},
\end{equation}
and the solution set for~\eqref{e:secondB} is $Y^{\prime\prime}=X^{\prime\prime}$ expressed
in~\eqref{e:second-sol}.
Solution sets $Y^{\prime}$ and $Y^{\prime\prime}$ and their intersection
are depicted in Figure \ref{f:ex2} (there are no ``genuine'' $(K,L)$-eigenvectors in this example). 
\begin{figure}
 \begin{center}
  % Requires \usepackage{graphicx}
\hspace{-10pt}
  \includegraphics[width=\textwidth]{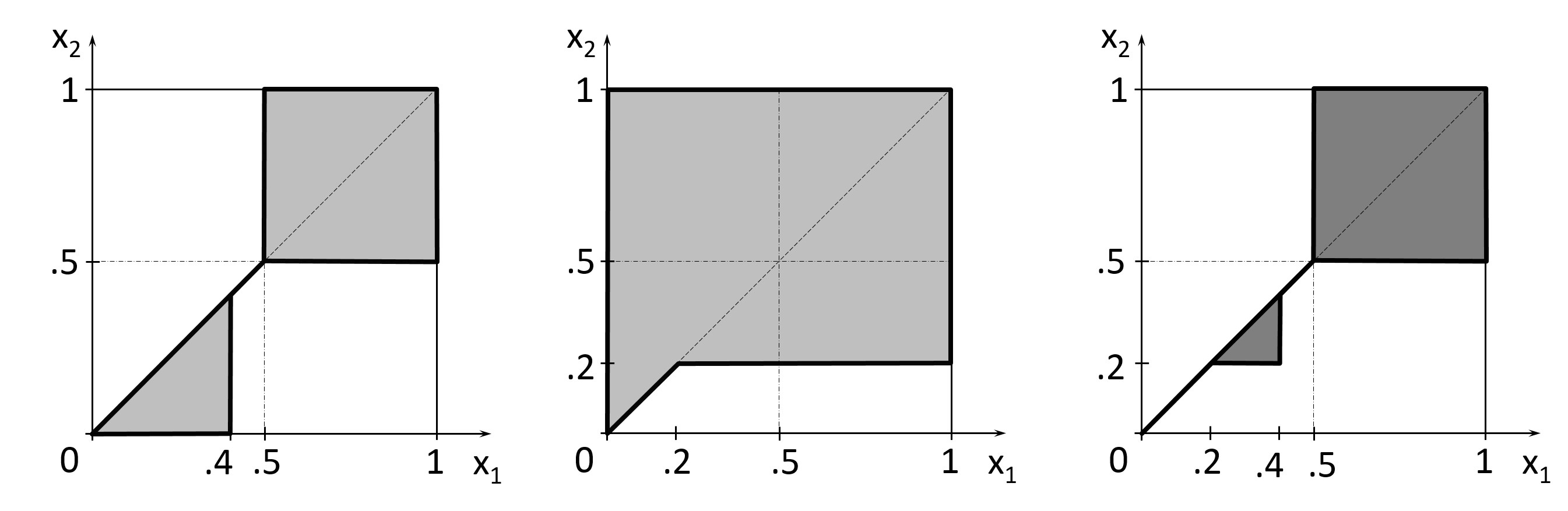}
  \caption{Sets $Y^{\prime}$ and $Y^{\prime\prime}$ and their intersection:
	the $0.5$-eigenspace of $B$}\label{f:ex2}
  \end{center}
\end{figure}
\end{example}

We are now going to give a theoretical description of background
eigenvectors, principal eigenvectors and $(K,L)$-eigenvectors in max-min algebra.

\subsection{Background $\lambda$-eigenvectors}
%---------------------------------------------------------------------------------------------

These are the vectors that satisfy $Ax=\lambda\mathbf{1}$ and $x_i\geq \lambda$ for all $i$.
Let us introduce the following notation:
\begin{equation}
\label{e:Klambda}
\begin{split}
N^{>\lambda}=\{k\colon \max\limits_{1\leq i\leq n} a_{ik}>\lambda\},\quad N^{\leq\lambda}=\{k\colon \max\limits_{1\leq i\leq n} a_{ik}\leq\lambda\}.%\\
%N^{=\lambda}&=\{k\colon \max\limits_{1\leq i\leq n} a_{ik}=\lambda\}
\end{split}
\end{equation}
The set of background eigenvectors can be described as follows.
\begin{proposition}
\label{p:backgr}
Let $A\in\B^{n\times n}$ and $\lambda\in\B$.
Then the set of background $\lambda$-eigenvectors of $A$ is nonempty
if and only if
\begin{equation}
\label{e:backgr-nonempty}
\max\limits_{1\leq j\leq n} a_{ij}\geq \lambda\quad\forall i.
\end{equation}
If~\eqref{e:backgr-nonempty} holds then the set of background eigenvectors is
given by
\begin{equation}
\label{e:backgr}
\{x\colon x_k=\lambda\ \text{for $k\in N^{>\lambda}$}, \lambda\leq x_k\leq 1\ \text{for
$k\in N^{\leq\lambda}$}\}.
\end{equation}
\end{proposition}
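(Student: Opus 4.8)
The plan is to analyze the scalar equation $(Ax)_i=\bigoplus_j\min(a_{ij},x_j)=\lambda$ for each $i$ under the standing assumption $x_j\geq\lambda$ for all $j$, splitting it into an upper-bound part ($(Ax)_i\leq\lambda$) and a lower-bound part ($(Ax)_i\geq\lambda$). The upper-bound part will pin down the components of $x$ indexed by $N^{>\lambda}$, while the lower-bound part will deliver the nonemptiness criterion \eqref{e:backgr-nonempty} and, together with the upper bound, verify that every candidate vector in \eqref{e:backgr} actually works.

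First I would extract the structural constraint from $(Ax)_i\leq\lambda$. Observe that $\min(a_{ij},x_j)>\lambda$ precisely when both $a_{ij}>\lambda$ and $x_j>\lambda$; hence the requirement $(Ax)_i\leq\lambda$ for all $i$ is equivalent to the condition that whenever $x_k>\lambda$ one has $a_{ik}\leq\lambda$ for every $i$, i.e. $k\in N^{\leq\lambda}$. Contrapositively, $k\in N^{>\lambda}$ forces $x_k\leq\lambda$, and combined with $x_k\geq\lambda$ this yields $x_k=\lambda$. This is exactly the constraint ``$x_k=\lambda$ for $k\in N^{>\lambda}$'' appearing in \eqref{e:backgr}; the remaining components, those with $k\in N^{\leq\lambda}$, are only subject to $\lambda\leq x_k\leq 1$, which is already part of the definition of a background eigenvector.

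Next I turn to the lower-bound part. For fixed $i$, attaining the value $\lambda$ in $(Ax)_i$ requires some $j$ with $\min(a_{ij},x_j)=\lambda$; since $x_j\geq\lambda$, this is possible only if $a_{ij}\geq\lambda$. Thus if row $i$ had all entries strictly below $\lambda$, then $(Ax)_i<\lambda$ for any admissible $x$, so no background eigenvector could exist, giving the ``only if'' direction of the nonemptiness claim. Conversely, assuming \eqref{e:backgr-nonempty} and taking any $x$ obeying \eqref{e:backgr}, I would verify $(Ax)_i=\lambda$ by picking $j$ with $a_{ij}\geq\lambda$: if $a_{ij}=\lambda$ then $\min(a_{ij},x_j)=\lambda$ because $x_j\geq\lambda$, whereas if $a_{ij}>\lambda$ then $j\in N^{>\lambda}$, so $x_j=\lambda$ and again $\min(a_{ij},x_j)=\lambda$. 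Combined with the upper bound this forces $(Ax)_i=\lambda$, so the vector is indeed a background eigenvector and the set \eqref{e:backgr} is nonempty, with $\lambda\mathbf{1}$ serving as the simplest explicit witness.

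The two directions together establish both the nonemptiness equivalence and the set equality. The only delicate point, and the place I would be most careful, is the case split on whether $a_{ij}$ equals $\lambda$ or strictly exceeds it: the ``reaching'' argument and the membership $k\in N^{>\lambda}$ interact exactly so that the value $x_k=\lambda$ already forced by the upper bound is precisely what makes the minimum land on $\lambda$ rather than overshoot it. Everything else is a routine unwinding of the max-min definitions.
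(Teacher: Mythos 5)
Your proposal is correct and follows essentially the same route as the paper's proof: the negation of \eqref{e:backgr-nonempty} kills $(Ax)_i\geq\lambda$, the columns in $N^{>\lambda}$ are forced to $\lambda$ by the upper bound $(Ax)_i\leq\lambda$, and the converse verification splits each row's witness $j$ exactly as the paper does. No gaps.
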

\begin{proof}
Observe first that if there exist $i$ with $a_{ij}<\lambda$ for all $j$, then also
$a_{ij}x_j<\lambda$ for all $j$ implying that $\bigoplus_{j=1}^n a_{ij}x_j=\lambda$ cannot hold and
the set of background eigenvectors is empty. If~\eqref{e:backgr-nonempty} holds then the constant
vector $x=\lambda\mathbf{1}$ satisfies $Ax=\lambda\mathbf{1}$ hence the set of background eigenvectors is nonempty.

If $k\in N^{>\lambda}$ then there exists $i$ that $a_{ik}>\lambda$, and we need $x_k=\lambda$ to make
sure that $\bigoplus_{j=1}^n a_{ij}x_j\leq\lambda$. This shows that the set of background $\lambda$-eigenvectors is a subset of~\eqref{e:backgr}.

Now take a vector from~\eqref{e:backgr}. Obviously, it satisfies $x_i\geq\lambda$ for each $i$.
Since also $\max_j a_{ij}\geq\lambda$ for all $j$, we have $\bigoplus_j a_{ij}x_j\geq \lambda$. But we
also have $\bigoplus_j a_{ij}x_j\leq \lambda$. Indeed, since $x_j=\lambda$ in~\eqref{e:backgr}
whenever $j\in N^{>\lambda}$, that is, whenever  there exists $i$ with $a_{ij}>\lambda$, we have
$a_{ij}x_j\leq\lambda$ for all such $j$ and all $i$. For $j\in N^{\leq\lambda}$, inequality
$a_{ij}x_j\leq\lambda$ follows from $\max_i a_{ij}\leq \lambda$ (by the definition of $N^{\leq\lambda}$).

The proof is complete.
\end{proof}

Note that the set of background eigenvectors is a max-min convex set, and not a max-min space. The proof of the following corollary of Proposition~\ref{p:lambdaboxgen} is straightforward and will be omitted.

\begin{corollary}
\label{c:lambdaboxgen}
If the set of background eigenvectors is non-empty, then it is the max-min convex hull of $e^{\lambda 0}=\lambda\mathbf{1}$ and vectors 
$e^{\lambda k}$ for $k\in N^{\leq\lambda},$ for which 
$$
e^{\lambda k}_i=
\begin{cases}
1, &\text{if $k=i$,}\\
\lambda, &\text{otherwise.}
\end{cases}
$$
\end{corollary}

The following proposition will be helpful when describing more general sets of the form~\eqref{e:backgr}
as max-min convex hulls.
\begin{proposition}
\label{p:lambdaboxgen}
Suppose that
\begin{equation}
\label{e:S}
S=\{x\colon x_k=\lambda\ \text{for $k\in N_1$},\  \lambda\leq x_k\leq 1\ \text{for
$k\in N_2$}\},
\end{equation}
where $N_1$ and $N_2$ are such that $N_1\cup N_2=N(=\{1,\ldots,n\})$ and $N_1\cap N_2=\emptyset$.
Then
\begin{equation}
\label{e:N_2}
S=\{\lambda \mathbf{1}\oplus z_{N_2}\colon z_{N_2}\in\B^{|N_2|}\}
\end{equation}
%where $\Lambda$ is defined by
%\begin{equation}
%\label{e:Lambda}
%\begin{split}
%& (\Lambda)_{ij}=
%\begin{cases}
%1, &\text{if $i=j$},\\
%\lambda, &\text{if $i\neq j$},
%\end{cases}\\
%&i,j\in N.
%\end{split}
%\end{equation}
\end{proposition}

\subsection{Principal $\lambda$-eigenvectors}
%---------------------------------------------------------------------------------------------
%
These are the vectors that satisfy $Ax=x$ and $x_i\leq\lambda$ for all $i$.
Description of a generating set of the space of principal
max-min $\lambda$-eigenvectors is given below. Observe that any
principal $\lambda$-eigenvector is a principal eigenvector and therefore we can apply
Theorem~\ref{t:principal}.
\begin{corollary}[Gondran-Minoux~\cite{GM:08}, Ch. 6, Corollary 3.5]
\label{c:principal}
The set of principal $\lambda$-eigenvectors of $A\in\B^{n\times n}$ is the max-min  column space
$\{A^*_{\lambda} y\colon y\in\B^n\}$, where the columns of
$A^*_{\lambda}$ are defined by
\begin{equation}
\label{e:genset-principal}
(A^*_{\lambda})_{\cdot i}= \lambda a_{ii}^+ (A^*)_{\cdot i}\colon i=1,\ldots,n.
\end{equation}
More precisely, each vector of~\eqref{e:genset-principal} is a principal $\lambda$-eigenvector,
and each principal $\lambda$-eigenvector $x$ can be represented as
\begin{equation}
\label{e:repr-principal}
x=\bigoplus_{i\in C(A,x)} \lambda x_ia_{ii}^+ (A^*)_{\cdot i},
\end{equation}
where $C(A,x)\subseteq N$ is a set containing a node from each strongly connected component of $\Sat(A,x)$.
\end{corollary}
\begin{proof}
The claim follows as an easy corollary of Theorem~\ref{t:principal}. Indeed, since
 $a_{ii}^+(A^*)_{\cdot i}$ satisfies $Ax=x$, so does
$\lambda a_{ii}^+ (A^*)_{\cdot i}$. As components of this vector do not exceed $\lambda$,
it is a principal $\lambda$-eigenvector. Letting $x$ be a principal $\lambda$-eigenvector, we see that it satisfies~\eqref{e:repr-princ} since it satisfies $Ax=x$. Equation~\eqref{e:repr-principal}
follows from~\eqref{e:repr-princ} after
multiplying both parts of~\eqref{e:repr-princ} by $\lambda$ and observing that $\lambda x=x$ since $x_i\leq \lambda$ for all $i$.
\end{proof}

\if{
\begin{corollary}
\label{c:principal-conv}
The set of max-min background $\lambda$-eigenvectors of $A$ is the max-min convex hull of
$\{p^i\colon i=0,1,\ldots,n\}$ where
\begin{equation}
\label{e:pidef}
p^0=\mathbf{0}, \quad p^i=\lambda a_{ii}^+ {(A^*)_{\cdot i},\quad i=1,\ldots,n.
\end{equation}
\end{corollary}
}\fi

\subsection{$(K,L)$ $\lambda$-eigenvectors}
%---------------------------------------------------------------------------------------------

Now we consider $(K,L)$ max-min $\lambda$-eigenvectors, i.e., $x\in\B^n$ such that
$Ax=\lambda x$,
$x_i\leq \lambda$ for $i\in K$ and
$x_i\geq \lambda$ for $i\in L$, where $K,L\subseteq\{1,\ldots,n\}$ are such that
$K\cup L=\{1,\ldots,n\}$ and $K\cap L=\emptyset$.

By this definition, every $(K,L)$ $\lambda$-eigenvector is of the form $x=(x_L \  x_K)$  (in a suitable permutation of indices ) with
%for fixed $(K,L)$-partition
%
\begin{align}
\label{Keqn}
A_{KK}x_K\oplus A_{KL}x_L &=  x_K,\quad x_K\leq \lambda \mathbf{1}_K\\
\label{Leqn}
A_{LK}x_K\oplus A_{LL}x_L &= \lambda\mathbf{1}_L, \quad x_L\geq \lambda \mathbf{1}_L
\end{align}

We start with describing the solvability and the set of solutions for
\eqref{Keqn}.
\begin{proposition}\label{solvability_1}
Equation \eqref{Keqn} is solvable (together with $x_K\leq \lambda \mathbf{1}_K$) with respect to 
$x_K$ if and only if
\begin{equation}
\label{Keqn-cond}
(A_{KK})^*A_{KL}x_L\leq \lambda\mathbf{1}_K,
\end{equation}
and then the set of its solutions for fixed $x_L$ is given by
\begin{equation}
\label{e:SK}
\begin{split}
S_K(x_L,\lambda)&=\{(A_{KK})^*A_{KL}x_L\oplus v\colon A_{KK}v=v,\  v\leq\lambda\mathbf{1}_K\}\\
&=\{(A_{KK})^*A_{KL}x_L\oplus (A_{KK})^*_{\lambda}z_K\colon z_K\in\B^{|K|}\}
\end{split}
\end{equation}
where $(A_{KK})^*_{\lambda}$ is defined as in~\eqref{e:genset-principal}.
\end{proposition}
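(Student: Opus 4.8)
The plan is to recognize that, for a fixed $x_L$, equation \eqref{Keqn} is nothing but the algebraic Bellman equation \eqref{e:bellman}. Setting $b:=A_{KL}x_L$, the equation reads $x_K=A_{KK}x_K\oplus b$, which is \eqref{e:bellman} with $A_{KK}$ in place of $A$. Theorem~\ref{t:schneider} then applies immediately and tells us that, ignoring the upper-bound constraint, the equation always has solutions and its full solution set is $\{(A_{KK})^*A_{KL}x_L\oplus v\colon A_{KK}v=v\}$. This is the structural backbone of the whole argument, and the rest consists in grafting the constraint $x_K\leq\lambda\mathbf{1}_K$ onto this description.

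For the solvability criterion I would use the fact that every solution dominates the distinguished one obtained by taking $v=\mathbf{0}$: since $(A_{KK})^*A_{KL}x_L\oplus v\geq (A_{KK})^*A_{KL}x_L$, the vector $(A_{KK})^*A_{KL}x_L$ is a lower bound for the entire solution set. Hence if \eqref{Keqn-cond} fails---i.e.\ some component of $(A_{KK})^*A_{KL}x_L$ exceeds $\lambda$---then no solution can lie below $\lambda\mathbf{1}_K$, and \eqref{Keqn} together with $x_K\leq\lambda\mathbf{1}_K$ is unsolvable. Conversely, if \eqref{Keqn-cond} holds, then $x_K=(A_{KK})^*A_{KL}x_L$ is itself a solution satisfying the constraint, so the constrained problem is solvable. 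This establishes the stated equivalence.

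To pin down the solution set \eqref{e:SK}, I would invoke the elementary max-min fact that a componentwise maximum is $\leq\lambda$ if and only if each term is $\leq\lambda$. Assuming \eqref{Keqn-cond}, a Bellman solution $(A_{KK})^*A_{KL}x_L\oplus v$ therefore satisfies $\leq\lambda\mathbf{1}_K$ precisely when $v\leq\lambda\mathbf{1}_K$, which yields the first equality in \eqref{e:SK}. Finally, the set $\{v\colon A_{KK}v=v,\ v\leq\lambda\mathbf{1}_K\}$ is by definition exactly the set of pure $\lambda$-eigenvectors of the submatrix $A_{KK}$; applying Corollary~\ref{c:pure} to $A_{KK}$ rewrites it as the column space $\{(A_{KK})^*_{\lambda}z_K\colon z_K\in\B^{|K|}\}$ with $(A_{KK})^*_{\lambda}$ as in \eqref{e:genset-pure}, giving the second equality.

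This argument is largely a bookkeeping reduction to results already proved, so I do not expect a serious obstacle; the one point requiring care is the clean decoupling of the constraint across the $\oplus$. This step is legitimate only because of the idempotent, componentwise max-structure of the algebra, and I would state it explicitly rather than leave it implicit, since it is precisely what allows the constraint on $x_K$ to be translated into the separate conditions $(A_{KK})^*A_{KL}x_L\leq\lambda\mathbf{1}_K$ and $v\leq\lambda\mathbf{1}_K$ without losing or gaining solutions.
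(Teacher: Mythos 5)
Your proposal is correct and follows essentially the same route as the paper, whose proof is simply the one-line remark that the assertion follows from Theorem~\ref{t:schneider}; you have merely filled in the details (the identification of \eqref{Keqn} as a Bellman equation with $b=A_{KL}x_L$, the least-solution argument for the solvability criterion, and the appeal to Corollary~\ref{c:pure} applied to $A_{KK}$ for the second equality), all of which are sound.
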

\begin{proof}
the assertion follows from Theorem~\ref{t:schneider}.
\end{proof}

Before describing the solvability of \eqref{Leqn}, denote
\begin{equation}
\label{e:L1L2}
L_1=\{i\in L\colon \bigoplus_{j\in L}a_{ij}\geq \lambda\},\quad
L_2=\{i\in L\colon \bigoplus_{j\in L}a_{ij}< \lambda\}
\end{equation}
\begin{proposition}\label{solvability_2}
Equation \eqref{Leqn}, considered together with $x_K\leq \lambda \mathbf{1}_K$, is equivalent to
\begin{align}
\label{L1eqn}
A_{L_1L}x_L &=  \lambda\mathbf{1}_{L_1},\quad x_L\geq \lambda \mathbf{1}_L,\\
\label{L2eqn}
A_{L_2K}x_K &= \lambda\mathbf{1}_{L_2}, \quad x_K\leq \lambda \mathbf{1}_K.
\end{align}
\end{proposition}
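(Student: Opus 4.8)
The plan is to treat equation \eqref{Leqn} row by row and to exploit the max-min sign constraints $x_K\leq\lambda\mathbf{1}_K$ and $x_L\geq\lambda\mathbf{1}_L$ that accompany the $(K,L)$ setting. For each $i\in L$, the $i$-th row of \eqref{Leqn} reads $\max(P_i,Q_i)=\lambda$, where I write $P_i=\bigoplus_{j\in L}a_{ij}x_j$ for the $L$-part and $Q_i=\bigoplus_{j\in K}a_{ij}x_j$ for the $K$-part. The first observation, valid for every $i$, is that $Q_i\leq\lambda$: since $x_j\leq\lambda$ for $j\in K$, each term satisfies $a_{ij}x_j=\min(a_{ij},x_j)\leq x_j\leq\lambda$. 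Thus the $K$-part can never push the left-hand side above $\lambda$, and the whole question reduces to controlling $P_i$.

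Next I would split $L$ along the disjoint partition $L=L_1\cup L_2$ from \eqref{e:L1L2} and argue the two cases separately. For $i\in L_1$ there is some $j^{*}\in L$ with $a_{ij^{*}}\geq\lambda$; since $x_{j^{*}}\geq\lambda$ as well, $a_{ij^{*}}x_{j^{*}}=\min(a_{ij^{*}},x_{j^{*}})\geq\lambda$ and hence $P_i\geq\lambda$. Combined with $Q_i\leq\lambda$, the row equation $\max(P_i,Q_i)=\lambda$ holds if and only if $P_i\leq\lambda$, that is, if and only if $P_i=\lambda$; this is exactly the $i$-th row of \eqref{L1eqn}. For $i\in L_2$ every $a_{ij}$ with $j\in L$ satisfies $a_{ij}<\lambda$, so $a_{ij}x_j=\min(a_{ij},x_j)\leq a_{ij}<\lambda$ and therefore $P_i<\lambda$, a finite maximum of quantities strictly below $\lambda$. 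Then $\max(P_i,Q_i)=\lambda$ forces the maximum to be attained by $Q_i$, so the row is equivalent to $Q_i=\lambda$, which is the $i$-th row of \eqref{L2eqn}. The converse directions are immediate: each reduced row implies the original one, using $Q_i\leq\lambda$ in the $L_1$ case and $P_i<\lambda$ in the $L_2$ case to check that the maximum equals $\lambda$.

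Since $L_1$ and $L_2$ partition $L$, collecting the row-wise equivalences over all $i\in L$ shows that the single system \eqref{Leqn} holds precisely when both \eqref{L1eqn} and \eqref{L2eqn} hold, the sign constraints being carried through unchanged. The step I expect to require the most care is bookkeeping rather than depth: one must keep the standing constraint $x_K\leq\lambda\mathbf{1}_K$ in play even though it is not written explicitly in \eqref{Leqn}, and one must use repeatedly that in an idempotent semiring a sum equals $\lambda$ exactly when no summand exceeds $\lambda$ and at least one attains it. The membership conditions defining $L_1$ and $L_2$ are precisely what decide which summand — the $L$-part $P_i$ or the $K$-part $Q_i$ — is responsible for attaining $\lambda$.
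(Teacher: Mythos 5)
Your proof is correct and follows essentially the same route as the paper: both arguments split $L$ into $L_1$ and $L_2$, observe that the $K$-part of each row is bounded above by $\lambda$ (since $x_K\leq\lambda\mathbf{1}_K$), that the $L$-part is at least $\lambda$ on the $L_1$-rows and strictly below $\lambda$ on the $L_2$-rows, and conclude that the respective "other" term in each row is redundant. Your write-up is somewhat more explicit about the two directions of the equivalence, but no new idea is involved.
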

\begin{proof}
	In terms of the notation \eqref{e:L1L2}, equation \eqref{Leqn}
 is written as follows:
\begin{eqnarray}
\label{L1eqnini}
A_{L_1K}x_K\oplus A_{L_1L}x_L =  \lambda\mathbf{1}_{L_1},\\
\label{L2eqnini}
A_{L_2K}x_K\oplus A_{L_2L}x_L = \lambda\mathbf{1}_{L_2},\\
\label{KLineqs}
x_L\geq \lambda \mathbf{1}_L,\quad x_K\leq \lambda \mathbf{1}_K.
\end{eqnarray}
We now show that \eqref{L1eqnini}, \eqref{L2eqnini}, \eqref{KLineqs} are equivalent to
\eqref{L1eqn} and \eqref{L2eqn}. Indeed, $x_L\geq \lambda \mathbf{1}_L$ together with $\bigoplus_{j\in L}a_{ij}\geq \lambda$
for $i\in L_1$ imply that $A_{L_1L}x_L \geq  \lambda\mathbf{1}_{L_1}$ holds for any feasible $x$,
and this makes the first term in~\eqref{L1eqnini} redundant, also since $x_K\leq \lambda \mathbf{1}_K$
implies $A_{L_1K}x_K\leq \lambda\mathbf{1}_{L_1}$. As for the second term in~\eqref{L2eqnini}, we have
$\bigoplus_{j\in L}a_{ij}< \lambda$ for $i\in L_2$ implying that $A_{L_2L}x_L<\lambda\mathbf{1}_{L_2}$
and making this term redundant as well.
\end{proof}

To solve~\eqref{L1eqn}
let us introduce the following notation in analogy with~\eqref{e:Klambda}:
\begin{align}
\label{e:blambda>}
L^{>\lambda,L_1} &=\{k\in L\colon \max\limits_{i\in L_1} a_{ik}>\lambda\}\\
\label{e:LbKlambda}
L^{>\lambda,K}  &=\{\ell\in L\colon \max_{k\in K}((A_{KK})^*A_{KL})_{k\ell}>\lambda\}\\
L^{\prime} &= L^{>\lambda,L_1}\cup L^{>\lambda,K},\quad \Tilde{L}=L\backslash L^{\prime}.
\end{align}

\begin{proposition}\label{solvability_3}
The solution set to \eqref{L1eqn} with condition~\eqref{Keqn-cond} is given by
\begin{equation}
\label{e:SKL-rep-L}
\begin{split}
& S_L(\lambda)=
\{\lambda\mathbf{1}_L \oplus z_{\Tilde{L}}\colon z_{\Tilde{L}}\in\B^{|\Tilde{L}|}\}.
\end{split}
\end{equation}

\end{proposition}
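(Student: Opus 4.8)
The plan is to prove that the solution set of \eqref{L1eqn} subject to \eqref{Keqn-cond} is exactly the box
\[
S=\{x_L\colon x_k=\lambda \text{ for } k\in L^\prime,\ \lambda\leq x_k\leq 1 \text{ for } k\in\Tilde{L}\},
\]
and then to invoke Proposition~\ref{p:lambdaboxgen}, applied with the ambient index set $L$, $N_1=L^\prime$ and $N_2=\Tilde{L}$, which rewrites $S$ in precisely the generated form \eqref{e:SKL-rep-L}. Throughout I abbreviate $B:=(A_{KK})^*A_{KL}$, so that condition \eqref{Keqn-cond} reads $Bx_L\leq\lambda\mathbf{1}_K$. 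I first record that for any $i\in L_1$ the defining property $\bigoplus_{j\in L}a_{ij}\geq\lambda$ from \eqref{e:L1L2}, together with $x_L\geq\lambda\mathbf{1}_L$, already forces $(A_{L_1L}x_L)_i\geq\lambda$; hence the whole content of \eqref{L1eqn} lies in the reverse inequality $(A_{L_1L}x_L)_i\leq\lambda$, which is the same phenomenon exploited in Proposition~\ref{p:backgr}.

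For necessity I would take any $x_L$ solving \eqref{L1eqn} under \eqref{Keqn-cond} and pin down its coordinates on $L^\prime$. If $k\in L^{>\lambda,L_1}$, I pick $i\in L_1$ with $a_{ik}>\lambda$; since $(A_{L_1L}x_L)_i\leq\lambda$ gives $\min(a_{ik},x_k)\leq\lambda$, the strict inequality $a_{ik}>\lambda$ forces $x_k\leq\lambda$, and combined with $x_k\geq\lambda$ this yields $x_k=\lambda$. If $k\in L^{>\lambda,K}$, I instead pick $k^\prime\in K$ with $B_{k^\prime k}>\lambda$, and the bound $(Bx_L)_{k^\prime}\leq\lambda$ forces $x_k=\lambda$ by the same reasoning. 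Since $L^\prime=L^{>\lambda,L_1}\cup L^{>\lambda,K}$, every solution lies in $S$.

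For sufficiency I would take any $x_L\in S$ and check the three requirements. The bound $x_L\geq\lambda\mathbf{1}_L$ is built into $S$. For \eqref{L1eqn} and a fixed $i\in L_1$, the lower bound $(A_{L_1L}x_L)_i\geq\lambda$ was noted above; for the matching upper bound I split the terms over $j\in L$: if $j\in L^\prime$ then $x_j=\lambda$ gives $\min(a_{ij},x_j)\leq\lambda$, while if $j\in\Tilde{L}$ then $j\notin L^{>\lambda,L_1}$ gives $a_{ij}\leq\lambda$, so again $\min(a_{ij},x_j)\leq\lambda$; hence $(A_{L_1L}x_L)_i=\lambda$. For \eqref{Keqn-cond} and a fixed $k\in K$ the argument is identical in spirit: $x_j=\lambda$ handles $j\in L^\prime$, while $j\in\Tilde{L}$ means $j\notin L^{>\lambda,K}$, i.e. $B_{kj}\leq\lambda$, so $\min(B_{kj},x_j)\leq\lambda$ and thus $(Bx_L)_k\leq\lambda$.

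I expect the only delicate point to be the bookkeeping of the two obstruction sets: the equality constraint \eqref{L1eqn} clamps the columns in $L^{>\lambda,L_1}$ to $\lambda$, whereas the one-sided constraint \eqref{Keqn-cond} clamps those in $L^{>\lambda,K}$, and one must verify that these are the only forced coordinates so that the remaining ones, indexed by $\Tilde{L}$, are genuinely free over $[\lambda,1]$. The asymmetry worth flagging is that, because \eqref{Keqn-cond} is merely an inequality, it imposes no lower-bound requirement of its own and hence only its ``$>\lambda$'' columns enter $L^\prime$; this is exactly why $\Tilde{L}=L\setminus(L^{>\lambda,L_1}\cup L^{>\lambda,K})$ and not something larger.
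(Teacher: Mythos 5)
Your proposal is correct and follows essentially the same route as the paper: the paper simply cites Proposition~\ref{p:backgr} for the box description of the solutions to \eqref{L1eqn} (the nonemptiness hypothesis being automatic from the definition of $L_1$) and then intersects with the coordinates forced to equal $\lambda$ by \eqref{Keqn-cond}, before invoking Proposition~\ref{p:lambdaboxgen} exactly as you do. Your version merely unfolds the covering/clamping argument explicitly instead of citing it, which is fine.
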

\begin{proof}
Following Proposition~\ref{p:backgr}, the set of solutions to~\eqref{L1eqn} is
\begin{equation}
\label{e:xLsol}
\{x_L\colon x_{\ell}=\lambda\ \text{for $\ell\in L^{>\lambda,L_1}$}, \lambda\leq x_{\ell}\leq 1\ \text{for
$\ell\notin L^{>\lambda,L_1}$}\}.
\end{equation}
However, we also have~\eqref{Keqn-cond}, which is satisfied whenever
\begin{equation}
\label{e:xLsol1}
x_{\ell}=\lambda \text { for all } \ell \in L^{>\lambda,K}.
\end{equation}
	The set of solutions to~\eqref{L1eqn} with respect to both conditions \eqref{e:xLsol} and \eqref{e:xLsol1} can be written as
\begin{equation}
\label{e:SL}
S_L(\lambda) = \{x_L\colon x_{\ell}=\lambda\ \text{for $\ell\in L^{\prime}$},
%L^{>\lambda,L_1}\cup L^{>\lambda,K}$,
\lambda\leq x_{\ell}\leq 1\ \text{for
$\ell\in\Tilde{L}$}\}.
\end{equation}
Using Proposition~\ref{p:lambdaboxgen} we can express $S_L(\lambda)$ as shown in
\eqref{e:SKL-rep-L}.
\end{proof}

\begin{proposition}\label{solvability_4}
The set of vectors $x_K$ solving \eqref{Keqn} for a given $x_L=\lambda\mathbf{1}_L \oplus z_{\Tilde{L}}$ is of the form
\begin{equation}
\label{e:SKL-rep-kk}
\begin{split}
& S_K(x_L,\lambda)=\{\lambda(A_{KK})^*A_{KL}\mathbf{1}_L\oplus (A_{KK})^*A_{K\Tilde{L}}z_{\Tilde{L}}
\oplus (A_{KK})^*_{\lambda}z_K\colon z_K\in\B^{|K|}\}.
\end{split}
\end{equation}
\end{proposition}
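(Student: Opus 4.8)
The plan is to derive~\eqref{e:SKL-rep-kk} directly from Proposition~\ref{solvability_1} by substituting the prescribed form of $x_L$ into the description~\eqref{e:SK} of $S_K(x_L,\lambda)$ and then simplifying. First I would verify that the solvability condition~\eqref{Keqn-cond} holds for every $x_L=\lambda\mathbf{1}_L\oplus\Lambda_{L\Tilde{L}}z_{\Tilde{L}}$, so that Proposition~\ref{solvability_1} applies. Indeed, for $\ell\notin L^{>\lambda,K}$ we have $((A_{KK})^*A_{KL})_{k\ell}\leq\lambda$ by the definition~\eqref{e:LbKlambda}, whereas for $\ell\in L^{>\lambda,K}$ the corresponding component of $x_L$ equals $\lambda$ (since $L^{>\lambda,K}\subseteq L'=L\setminus\Tilde{L}$ and the $L'$-components of $\Lambda_{L\Tilde{L}}z_{\Tilde{L}}$ are all $\leq\lambda$). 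In either case every term of $(A_{KK})^*A_{KL}x_L$ is $\leq\lambda$, which is exactly~\eqref{Keqn-cond}.

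With~\eqref{e:SK} in force it only remains to rewrite the fixed part $(A_{KK})^*A_{KL}x_L$. Writing $B:=(A_{KK})^*A_{KL}$ and distributing over $x_L=\lambda\mathbf{1}_L\oplus\Lambda_{L\Tilde{L}}z_{\Tilde{L}}$ gives $(A_{KK})^*A_{KL}x_L=\lambda B\mathbf{1}_L\oplus B\Lambda_{L\Tilde{L}}z_{\Tilde{L}}$, where $\lambda$ is pulled out of the first summand by commutativity of $\otimes$. Since the $\Tilde{L}$-columns of $B$ equal $(A_{KK})^*A_{K\Tilde{L}}$, comparison with~\eqref{e:SKL-rep-kk} reduces the whole proposition to the identity
\[
\lambda B\mathbf{1}_L\oplus B\Lambda_{L\Tilde{L}}z_{\Tilde{L}}=\lambda B\mathbf{1}_L\oplus (A_{KK})^*A_{K\Tilde{L}}\Lambda_{\Tilde{L}\Tilde{L}}z_{\Tilde{L}},
\]
i.e.\ to showing that replacing the full block $B\Lambda_{L\Tilde{L}}$ by its sub-block does not alter the right-hand side once the background term $\lambda B\mathbf{1}_L$ is present.

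The key computation I would carry out is the entrywise splitting $B\Lambda_{L\Tilde{L}}=(A_{KK})^*A_{K\Tilde{L}}\Lambda_{\Tilde{L}\Tilde{L}}\oplus E$, where, by the definition~\eqref{e:Lambda} of $\Lambda$, the matrix $E$ gathers exactly the contributions of the discarded columns indexed by $L'$, so that $E_{k\ell}=\bigoplus_{m\in L'}(B_{km}\wedge\lambda)$ for $\ell\in\Tilde{L}$; in particular every entry of $E$ is $\leq\lambda$. One inequality in the displayed identity is then immediate from $(A_{KK})^*A_{K\Tilde{L}}\Lambda_{\Tilde{L}\Tilde{L}}\leq B\Lambda_{L\Tilde{L}}$. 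For the reverse I would bound $(Ez_{\Tilde{L}})_k\leq\lambda\wedge\bigoplus_{m\in L'}B_{km}\leq\lambda\wedge\bigoplus_{m\in L}B_{km}=(\lambda B\mathbf{1}_L)_k$, using $L'\subseteq L$, so that $Ez_{\Tilde{L}}$ is absorbed by the background term $\lambda B\mathbf{1}_L$ under $\oplus$. Combining the two bounds establishes the identity and hence~\eqref{e:SKL-rep-kk}.

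I expect the absorption step to be the main (if modest) obstacle: one must verify that the stray terms produced by the discarded columns $L'$ never exceed the background value $\lambda B\mathbf{1}_L$ and therefore vanish in the maximum. This rests on the elementary bounds $B_{km}\wedge\lambda\leq\lambda$ and $B_{km}\leq\bigoplus_{m'\in L}B_{km'}$, and it is precisely the point where the special form of $x_L$ supplied by Proposition~\ref{solvability_3}, with its $L'$-components frozen at $\lambda$, is used.
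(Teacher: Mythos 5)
Your proof is correct and follows essentially the same route as the paper: substitute the given form of $x_L$ into~\eqref{e:SK}, distribute, split the columns of $(A_{KK})^*A_{KL}\Lambda_{L\Tilde{L}}$ over $\Tilde{L}$ and $L'$, and absorb the $L'$-contribution (whose entries are capped at $\lambda$ by~\eqref{e:Lambda}) into the background term $\lambda(A_{KK})^*A_{KL}\mathbf{1}_L$. Your explicit check that~\eqref{Keqn-cond} holds for every such $x_L$ is a small, welcome addition that the paper leaves implicit in Proposition~\ref{solvability_3}.
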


\begin{proof}
The computation of $S_K(x_L,\lambda)$ in view of \eqref{e:SK} uses $x_L\in S_L(\lambda)$ according to  \eqref{e:SKL-rep-L} and distributivity.
\begin{equation}
\label{e:SKL-rep-k}
\begin{split}
& S_K(x_L,\lambda)=\{(A_{KK})^*A_{KL}x_L\oplus (A_{KK})^*_{\lambda}z_K\colon z_K\in\B^{|K|}\}\\
%&= \{\lambda(A_{KK})^*A_{KL}\mathbf{1}_L\oplus (A_{KK})^*A_{KL}\Lambda_{L\Tilde{L}}z_{\Tilde{L}}
%\oplus (A_{KK})^*_{\lambda}z_K\colon z_K\in\B^{|K|}\}\\
&= \{\lambda(A_{KK})^*A_{KL}\mathbf{1}_L\oplus (A_{KK})^*A_{K\Tilde{L}}z_{\Tilde{L}}
\oplus (A_{KK})^*_{\lambda}z_K\colon z_K\in\B^{|K|}\}.
\end{split}
\end{equation}
\if{
To deduce the last expression we observed that
\begin{equation*}
\begin{split}
&(A_{KK})^*A_{KL}\Lambda_{L\Tilde{L}}z_{\Tilde{L}}=
(A_{KK})^*A_{K\Tilde{L}}\Lambda_{\Tilde{L}\Tilde{L}}z_{\Tilde{L}}\oplus
(A_{KK})^*A_{KL'}\Lambda_{L'\Tilde{L}}z_{\Tilde{L}}\\
&\leq (A_{KK})^*A_{K\Tilde{L}}\Lambda_{\Tilde{L}\Tilde{L}}z_{\Tilde{L}}\oplus
\lambda (A_{KK})^*A_{KL} \mathbf{1}_{L},
\end{split}
\end{equation*}
using the definition~\eqref{e:Lambda}.
}\fi
\end{proof}

%\begin{proposition}\label{solvability_5}
%
%\end{proposition}
%
%\if{
%}\fi

Substituting $x_K\in S_K(x_L,\lambda)$ given by~\eqref{e:SKL-rep-kk} into~\eqref{L2eqn}, we obtain 
the following equation:
\begin{equation}
\label{e:finaleq}
\lambda A_{L_2K}(A_{KK})^*A_{KL}\mathbf{1}_L\oplus
A_{L_2K}(A_{KK})^*A_{K\Tilde{L}}z_{\Tilde{L}}
\oplus A_{L_2K}(A_{KK})^*_{\lambda}z_K=\lambda\mathbf{1}_{L_2}
\end{equation}

It can be seen that the system \eqref{e:finaleq} is of the form
$A'z'\oplus b'=\lambda \mathbf{1}$ where
\begin{equation}
\label{e:Abz}
\begin{split}
A'&=(A_{L_2K}(A_{KK})^*A_{K\Tilde{L}},\   A_{L_2K}(A_{KK})^*_{\lambda}),\\
b'&= \lambda A_{L_2K}(A_{KK})^*A_{KL}\mathbf{1}_L,\  z'=(z_{\Tilde{L}}\ z_K)^T.
\end{split}
\end{equation}
This observation enables us to describe the solution $z'=(z_{\Tilde{L}}\ z_K)^T$ using the 
results of Subsection~\ref{s:special}, as below. 

\begin{proposition}\label{solvability_7}
The minimal solutions $(z')^W=(z_K^W z_{\Tilde{L}}^W)$ of $A'z'\oplus b'=\lambda \mathbf{1}$ with 
$A'$ and $b'$ as in \eqref{e:Abz} are defined by 
\begin{equation}
(z')^W_j=
\begin{cases}
\lambda, &\text{if $j\in W$},\\
0, &\text{otherwise}.
\end{cases}
\end{equation}
where $\{\cup C_j\colon j\in W\}$ with 
\begin{equation}
\label{e:Cjnew}
C_j=\{i\in I_0\colon (A')_{ij}=\lambda\}\quad  \text{for $j\in\Tilde{N}$},
\end{equation}
is a minimal covering of
\begin{equation}
\label{e:I0new}
I_0=\{i\in L_2\colon A_{iK}(A_{KK})^*A_{KL}\mathbf{1}_L<\lambda\}.
\end{equation}
\end{proposition}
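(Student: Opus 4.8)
The plan is to observe that equation~\eqref{e:finaleq} has already been cast into the standard shape $A'z'\oplus b'=\lambda\mathbf{1}$ of Subsection~\ref{s:special}, with $A'$, $b'$, $z'$ as in~\eqref{e:Abz}, and then to apply Proposition~\ref{p:solution_xW} (equivalently Corollary~\ref{c:Axb}) verbatim. The only hypothesis needed before that machinery becomes available is condition~\eqref{e:leqlambda}: every entry of $A'$ and every entry of $b'$ must be $\leq\lambda$. So the genuine content of the proof is to verify~\eqref{e:leqlambda} for the specific $A'$ and $b'$ in~\eqref{e:Abz}; once this is done, the statement is a direct transcription of Proposition~\ref{p:solution_xW} into the present notation.

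First I would check the bound on $b'$. Since $b'=\lambda A_{L_2K}(A_{KK})^*A_{KL}\mathbf{1}_L$ carries the leading scalar factor $\lambda$, each entry is of the form $\min(\lambda,t)$ and is therefore $\leq\lambda$; this simultaneously identifies $I_0=\{i\colon b'_i<\lambda\}$ with the set in~\eqref{e:I0new}, because for $i\in L_2$ the inequality $b'_i<\lambda$ holds exactly when $A_{iK}(A_{KK})^*A_{KL}\mathbf{1}_L<\lambda$. Next I would bound the two column blocks of $A'$ separately. For the block $A_{L_2K}(A_{KK})^*_{\lambda}$, definition~\eqref{e:genset-pure} shows that each column of $(A_{KK})^*_{\lambda}$ is $\lambda$ times another vector, so all its entries are $\leq\lambda$, and left-multiplying by $A_{L_2K}$, whose entries are $\leq 1$, preserves this bound in the max-min product.

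The one genuinely non-formal step, and the main obstacle, is the first block $A_{L_2K}(A_{KK})^*A_{K\Tilde{L}}\Lambda_{\Tilde{L}\Tilde{L}}$, where the scalar-factor argument is unavailable. Here one must instead invoke the definition of $\Tilde{L}$: by~\eqref{e:LbKlambda} together with $\Tilde{L}=L\setminus L^{\prime}$, no column of $(A_{KK})^*A_{KL}$ indexed by $\Tilde{L}$ has an entry exceeding $\lambda$, i.e.\ every entry of $(A_{KK})^*A_{K\Tilde{L}}$ is $\leq\lambda$. Since the max-min product of a matrix with entries $\leq\lambda$ and matrices with entries $\leq 1$ (namely $A_{L_2K}$ on the left and $\Lambda_{\Tilde{L}\Tilde{L}}$ on the right) again has all entries $\leq\lambda$, the whole first block is bounded by $\lambda$. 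This completes the verification of~\eqref{e:leqlambda}.

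With~\eqref{e:leqlambda} in hand, Proposition~\ref{p:solution_xW} applies directly to $A'z'\oplus b'=\lambda\mathbf{1}$: a vector $z'$ is a solution iff $z'\geq(z')^W$ for some $W\subseteq\Tilde{N}$, where $\Tilde{N}=\Tilde{L}\cup K$ indexes the columns of $A'$ and $C^W$ is a minimal covering of $I_0$; the minimal solutions are then exactly the vectors $(z')^W$ with $(z')^W_j=\lambda$ for $j\in W$ and $0$ otherwise. It remains only to read off the covering data in the present notation: the sets $C_j=\{i\in I_0\colon (A')_{ij}=\lambda\}$ are precisely~\eqref{e:Cjnew}, and $I_0$ is the set~\eqref{e:I0new} identified above. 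This yields the claimed description of the minimal solutions.
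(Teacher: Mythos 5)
Your proposal is correct and follows essentially the same route as the paper: both verify that all entries of $A'$ and $b'$ are bounded by $\lambda$ (using the definition of $\Tilde{L}$ via~\eqref{e:LbKlambda} for the block $(A_{KK})^*A_{K\Tilde{L}}$ and the scalar factor $\lambda$ in~\eqref{e:genset-pure} for $(A_{KK})^*_{\lambda}$ and $b'$), and then invoke the covering machinery of Subsection~\ref{s:special} with $\Tilde{N}=\Tilde{L}\cup K$ in place of $N$. Your explicit identification of $I_0=\{i\colon b'_i<\lambda\}$ with~\eqref{e:I0new} is a detail the paper leaves implicit, but it is the correct reading.
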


\begin{proposition}\label{allsolutions}
The set of all solutions of $A'z'\oplus b'=\lambda \mathbf{1}$ with 
$A'$ and $b'$ as in \eqref{e:Abz} is given by $\cup_W S_W$ where the union is taken over all 
minimal coverings $\{\cup C_j\colon j\in W\}$ of $I_0$ and 
$S_W=\{z^W\oplus v_{\Tilde{N}}\colon v_{\Tilde{N}}\in\B^{|\Tilde{N}|}\}$
%with $\Lambda^W\in\B^{|\Tilde{N}|\times |\Tilde{N}|}$ defined by 
%\begin{equation*}
%(\Lambda^W)_{ij}
%=
%\begin{cases}
%1, & \text{if $i=j$},\\
%z_i^W, & \text{if $i\neq j$}.
%\end{cases}
%\end{equation*}
\end{proposition}

\begin{proof}[Proof of Proposition~\ref{solvability_7} and~\ref{allsolutions}]
Observe that all entries of $A_{KK}^*A_{K\Tilde{L}}$ do not exceed $\lambda$ by~\eqref{e:LbKlambda}
and the definition of $\Tilde{L}$, and that all coefficients of $(A_{KK})^*_{\lambda}$ do not
exceed $\lambda$ by~\eqref{e:genset-principal}.
Hence the entries of $A'$ and $b'$ do not exceed $\lambda$, and all solutions of
$A'z'\oplus b'=\lambda\mathbf{1}$ can be found as in Corollary~\ref{c:Axb} and 
using~\eqref{e:SWM}, with $A'$ and $b'$
instead of $A$ and $b$,
$\Tilde{N}=\Tilde{L}\cup K$ instead of $N$.
\end{proof}

%Using Proposition~\ref{p:xboxgen} to express the solution sets $S^W:=\{z'\colon (z')^W\leq z'\leq\mathbf{1}\}$
%algebraically, we can substitute the result back in~\eqref{e:SKL-rep-L} and \eqref{e:SKL-rep-kk} thus obtaining
%the following description of $(K,L)$-eigenvectors.

The next theorem, which describes the set of $(K,L)$ $\lambda$-eigenvectors, is the main result of the section.
This theorem follows from the arguments written above, but we also include a formal proof based on backtracking the above arguments.

\begin{theorem}
\label{t:mainres}
Let $A\in\B^{n\times n}$, $\lambda\in\B$ and $K,L$ such that $K\cup L=N$ and $K\cap L=\emptyset$
be given. Then $(K,L)$-eigenvector exists if and only if~\eqref{e:finaleq} is solvable, which happens if
and only if $I_0=\cup_{j\in\Tilde{N}} C_j$, with $I_0$ and $C_j$ defined as in~\eqref{e:Cjnew}
and~\eqref{e:I0new} and $\Tilde{N}=\Tilde{L}\cup K$.

In this case, $x=(x_K\ x_L)^T$ is a $(K,L)$-eigenvector of $A$ with eigenvalue $\lambda$
if and only if $x_K$ and $x_L$ can be expressed as follows:

\begin{equation}
\label{e:SKL-rep-final}
\begin{split}
x_L&= \lambda\mathbf{1}_L \oplus  v_{\Tilde{L}},\\
x_K&= \lambda(A_{KK})^*A_{KL}\mathbf{1}_L\oplus (A_{KK})^*A_{K\Tilde{L}} v_{\Tilde{L}}
\oplus (A_{KK})^*_{\lambda}(z_{K}^W \oplus v_{K}),
\end{split}
\end{equation}
where $v_{K}\in\B^{|K|},$
$v_{\Tilde{L}}\in\B^{|\Tilde{L}|},$ and $z^W$ is the minimal solution
of~\eqref{e:finaleq} corresponding to a minimal covering $\{C_j\colon j\in W\}$ of $I_0$.
%and $\Tilde{L}=L\backslash(L^{>\lambda,L_1}\cup L^{>\lambda,K)$ (with $L^{>\lambda,L_1}$ and $L^{>\lambda, K}$ defined as in~\eqref{e:blambda>} and~\eqref{e:LbKlambda}).
\end{theorem}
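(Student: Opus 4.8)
The plan is to assemble the theorem as the composition of the equivalences already established in Propositions~\ref{solvability_1}--\ref{allsolutions}, carefully tracking how the free parameters $z_{\Tilde{L}}$, $z_K$ and $v_{\Tilde{N}}$ enter the two blocks $x_L$ and $x_K$. The starting point is that $x=(x_K\ x_L)^T$ is a $(K,L)$-eigenvector precisely when it satisfies the coupled system~\eqref{Keqn}--\eqref{Leqn}. First I would invoke Proposition~\ref{solvability_2} to replace~\eqref{Leqn} by the decoupled pair~\eqref{L1eqn}--\eqref{L2eqn}, so that the defining conditions become~\eqref{Keqn}, \eqref{L1eqn} and \eqref{L2eqn} together with the sign constraints on $x_K$ and $x_L$.

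For the necessity direction, the idea is to peel off the conditions one block at a time. Since $x_K$ must solve~\eqref{Keqn}, Proposition~\ref{solvability_1} forces the compatibility condition~\eqref{Keqn-cond}; combining this with~\eqref{L1eqn} and appealing to Proposition~\ref{solvability_3} shows that $x_L$ must lie in $S_L(\lambda)$, i.e. $x_L=\lambda\mathbf{1}_L\oplus\Lambda_{L\Tilde{L}}z_{\Tilde{L}}$ for some $z_{\Tilde{L}}\in\B^{|\Tilde{L}|}$. For each such $x_L$, Proposition~\ref{solvability_4} describes all $x_K$ solving~\eqref{Keqn} by the formula~\eqref{e:SKL-rep-kk}, introducing the second parameter $z_K$. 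Substituting this expression for $x_K$ into the remaining equation~\eqref{L2eqn} yields exactly~\eqref{e:finaleq}, that is, the reduced system $A'z'\oplus b'=\lambda\mathbf{1}_{L_2}$ in the single unknown $z'=(z_{\Tilde{L}}\ z_K)^T$ with $A'$, $b'$ as in~\eqref{e:Abz}. Propositions~\ref{solvability_7} and~\ref{allsolutions} then supply both the solvability criterion $I_0=\bigcup_{j\in\Tilde{N}}C_j$ and the parameterization $z'=z^W\oplus\Lambda^W v_{\Tilde{N}}$ ranging over minimal coverings $W$ of $I_0$, which establishes the existence claim and the ``only if'' half of the representation.

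For sufficiency I would run the same chain in reverse: starting from any minimal covering $W$ of $I_0$ and any $v_{\Tilde{N}}\in\B^{|\Tilde{N}|}$, set $z_{\Tilde{L}}=z_{\Tilde{L}}^W\oplus\Lambda^W_{\Tilde{L}\Tilde{N}}v_{\Tilde{N}}$ and $z_K=z_K^W\oplus\Lambda^W_{K\Tilde{N}}v_{\Tilde{N}}$; by Proposition~\ref{allsolutions} the vector $z'=(z_{\Tilde{L}}\ z_K)^T$ solves~\eqref{e:finaleq}, and plugging these parameters back into the formula for $x_L\in S_L(\lambda)$ and into the formula for $x_K\in S_K(x_L,\lambda)$ given by~\eqref{e:SKL-rep-kk} produces precisely~\eqref{e:SKL-rep-final}. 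It then remains to check that this $x$ genuinely satisfies~\eqref{Keqn}, \eqref{L1eqn} and \eqref{L2eqn} with the correct signs; this is automatic because at each stage the cited proposition describes an exact solution set, and because $x_L\in S_L(\lambda)$ already encodes~\eqref{Keqn-cond} by the construction in Proposition~\ref{solvability_3}.

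The step I expect to be the main obstacle is the bookkeeping in the sufficiency direction: the parameter $z_{\Tilde{L}}$ appears simultaneously in $x_L$ and in $x_K$ (through the term $(A_{KK})^*A_{K\Tilde{L}}\Lambda_{\Tilde{L}\Tilde{L}}z_{\Tilde{L}}$), so one must verify that the single $z'$ produced by a given minimal covering is used consistently across both blocks, and that the simplification of $(A_{KK})^*A_{KL}\Lambda_{L\Tilde{L}}z_{\Tilde{L}}$ carried out in the proof of Proposition~\ref{solvability_4} remains valid after the substitution. Confirming this consistency is the only genuinely non-mechanical part; everything else is a direct reading-off from the cited propositions.
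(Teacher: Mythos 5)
Your proposal is correct and follows essentially the same route as the paper's own proof: both assemble the theorem by chaining Propositions~\ref{solvability_1}--\ref{allsolutions} (splitting~\eqref{Leqn} via Proposition~\ref{solvability_2}, parameterizing $x_L$ and $x_K$ via Propositions~\ref{solvability_3} and~\ref{solvability_4}, and reducing~\eqref{L2eqn} to~\eqref{e:finaleq} solved by Propositions~\ref{solvability_7} and~\ref{allsolutions}), the only difference being that the paper traverses the chain backwards from the solutions of~\eqref{e:finaleq} while you run it forwards and then reverse it.
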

\begin{proof}
Consider a set of vectors $z_{\Tilde{N}}=(z_K\ z_{\Tilde{L}})$ described by 
\begin{equation*}
%\begin{split}
z_K=z_K^W\oplus v_K, \quad 
z_{\Tilde{L}}=z_{\Tilde{L}}^W\oplus v_{\Tilde{L}},\quad v_{K}\in\B^{|K|},\quad 
v_{\Tilde{L}}\in\B^{|\Tilde{L}|}
%\end{split}
\end{equation*}
for a minimal $W$ such that $\cup_{j\in W} C_j=I_0$. By Proposition~\ref{solvability_7} 
and Proposition~\ref{allsolutions}, the union of these sets over $W$ comprises the set of all solutions 
to~\eqref{e:finaleq}, which is solvable if and only if $\cup_{j\in\Tilde{N}} C_j=I_0$. 

Equation~\eqref{e:finaleq}, if it is solvable, is the same as $A_{L_2K}x_K=\lambda \mathbf{1}_{L_2}$ of~\eqref{L2eqn}, where $x_K\in S_K(x_L,\lambda)$ with the latter set described in~\eqref{e:SKL-rep-kk}. 
Note that the condition $x_K\leq \lambda \mathbf{1}_K$
immediately follows from~\eqref{e:SKL-rep-kk}. Then we see that~\eqref{e:SKL-rep-final} describes all vectors 
$(x_K\ x_L)$ such that $x_L\in S_L(\lambda)$, $x_K\in S_K(x_L,\lambda)$ and condition~\eqref{L2eqn} is 
satisfied, after observing that $x_L=\lambda\mathbf{1}_L\oplus z^W_{\Tilde{L}}\oplus v_{\Tilde{L}}=
\lambda\mathbf{1}\oplus v_{\Tilde{L}}$ (since each component of $z^W$ is less than or equal to $\lambda$). 

By Proposition~\ref{solvability_4}, $S_K(x_L,\lambda)$ is the set of all vectors $x_K$ solving~\eqref{Keqn}
for a given $x_L=\lambda \mathbf{1}_L\oplus z_{\Tilde{L}}$, which is by 
Proposition~\ref{solvability_3} a general solution to~\eqref{L1eqn} with condition~\eqref{Keqn-cond}.
Observe that by Proposition~\ref{solvability_1} condition~\eqref{Keqn-cond} is equivalent to solvability 
of~\eqref{Keqn} with respect to $x_K$ for a given $x_L$.  Thus~\eqref{e:SKL-rep-final} describes all vectors 
$(x_K\ x_L)$ that satisfy~\eqref{Keqn},~\eqref{L1eqn} and~\eqref{L2eqn} simultaneously. 

Furthermore, in view of Proposition~\ref{solvability_2},   \eqref{L1eqn}
and~\eqref{L2eqn} can be replaced by~\eqref{Leqn}, implying that 
~\eqref{e:SKL-rep-final} yields all solutions to~\eqref{Keqn} and~\eqref{Leqn} if~\eqref{e:finaleq} is 
solvable. 

If~\eqref{e:finaleq} is not solvable then~\eqref{L2eqn} is not compatible with~\eqref{Keqn} 
and~\eqref{L1eqn}. The proof is complete. 
\end{proof}

Let us also note the following special case of the above considerations.

\begin{corollary}
\label{solvability_5}
If $L_2=\emptyset$ then the solution set to \eqref{Keqn}, \eqref{Leqn} is the set of 
$x=(x_L\ x_K)$ given by
\begin{equation}
\label{e:SLSK}
\begin{split}
x_L&= \lambda\mathbf{1}_L \oplus z_{\Tilde{L}},\\
x_K&= \lambda(A_{KK})^*A_{KL}\mathbf{1}_L\oplus (A_{KK})^*A_{K\Tilde{L}}
z_{\Tilde{L}}\oplus (A_{KK})^*_{\lambda}z_{K},
\end{split}
\end{equation}
where  parameters $z_{\Tilde{L}}\in\B^{|\Tilde{L}|},\ z_K\in\B^{|K|}$ have arbitrary values.
\end{corollary}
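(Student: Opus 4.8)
The plan is to obtain Corollary~\ref{solvability_5} as a direct specialization of the machinery built up in Propositions~\ref{solvability_2}--\ref{solvability_4}, exploiting the fact that the hypothesis $L_2=\emptyset$ removes the only coupling constraint that forces a minimal-covering analysis in the general case. First I would observe that when $L_2=\emptyset$ the equation \eqref{L2eqn} has an empty index set and is therefore vacuously satisfied; consequently, by Proposition~\ref{solvability_2}, the system \eqref{Keqn}, \eqref{Leqn} is equivalent to \eqref{Keqn} together with \eqref{L1eqn} alone. Equivalently, in the language of Theorem~\ref{t:mainres}, the set $I_0$ of \eqref{e:I0new} is empty, so the only minimal covering is the one with $W=\emptyset$, $z^W=\mathbf{0}$ and $\Lambda^W$ equal to the identity on $\Tilde{N}$; substituting this into \eqref{e:SKL-rep-final} collapses the expressions $z_{\Tilde{L}}^W\oplus\Lambda^W_{\Tilde{L}\Tilde{N}}v_{\Tilde{N}}$ and $z_K^W\oplus\Lambda^W_{K\Tilde{N}}v_{\Tilde{N}}$ into free parameters $z_{\Tilde{L}}\in\B^{|\Tilde{L}|}$ and $z_K\in\B^{|K|}$, which already yields \eqref{e:SLSK}. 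I would nonetheless prefer the slightly more transparent route through the two propositions below.

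Next I would invoke Proposition~\ref{solvability_3} to describe $x_L$: its solution set $S_L(\lambda)$ to \eqref{L1eqn}, taken together with the solvability condition \eqref{Keqn-cond} (which, by Proposition~\ref{solvability_1}, is exactly what makes \eqref{Keqn} solvable in $x_K$), is given by \eqref{e:SKL-rep-L}, i.e. $x_L=\lambda\mathbf{1}_L\oplus\Lambda_{L\Tilde{L}}z_{\Tilde{L}}$ with $z_{\Tilde{L}}$ arbitrary. Since $L_2=\emptyset$ imposes no further restriction on $x_L$, this is the complete $L$-part of the answer, matching the first line of \eqref{e:SLSK}.

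Then, for each such $x_L$, I would apply Proposition~\ref{solvability_4}: the set $S_K(x_L,\lambda)$ of vectors $x_K$ solving \eqref{Keqn} (with the built-in bound $x_K\leq\lambda\mathbf{1}_K$) is precisely \eqref{e:SKL-rep-kk}, which is the second line of \eqref{e:SLSK} with $z_K$ free. Because $L_2=\emptyset$ means \eqref{L2eqn} imposes no relation between $x_K$ and $x_L$, every pair $(x_L,x_K)$ with $x_L\in S_L(\lambda)$ and $x_K\in S_K(x_L,\lambda)$ solves \eqref{Keqn} and \eqref{Leqn} simultaneously, and conversely every solution arises this way. Concatenating the two displayed formulas then gives \eqref{e:SLSK} with $z_{\Tilde{L}}$ and $z_K$ ranging freely. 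I do not anticipate a genuine obstacle here: the only point needing care is confirming that the solvability condition \eqref{Keqn-cond} is already absorbed into the description of $S_L(\lambda)$ through the definition of $\Tilde{L}$ (via $L^{>\lambda,K}$), so that no hidden compatibility requirement between $x_K$ and $x_L$ survives once $L_2$ is empty.
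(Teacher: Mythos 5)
Your proposal is correct and follows essentially the same route the paper intends: since $L_2=\emptyset$ makes \eqref{L2eqn} (and hence \eqref{e:finaleq}) vacuous, the solution set is just the combination of $S_L(\lambda)$ from Proposition~\ref{solvability_3} with $S_K(x_L,\lambda)$ from Proposition~\ref{solvability_4}, which is exactly the paper's remark that \eqref{e:finaleq} is trivially solvable as an ``empty equation''. Your additional check that \eqref{Keqn-cond} is already absorbed into $S_L(\lambda)$ via $L^{>\lambda,K}$ and $\Tilde{L}$ is the right point to verify and is handled correctly.
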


In this case~\eqref{e:finaleq} is trivially solvable, as an ``empty equation''.

Theorem~\ref{t:mainres} gives us a clear way to generate all $(K,L)$-eigenvectors. If we want to ''test'' its validity, we may recall that, since it can be easily shown that $Ax\leq x$ and $A^*x=x$ are equivalent~\cite{GM:08} and since any $\lambda$-eigenvector satisfies $Ax=\lambda x\leq x$, we should have $A^*x=x$ for any $\lambda$-eigenvector $x$. 
Let us show that
any vector given by~\eqref{e:SKL-rep-final} 
satisfies this property.

\begin{corollary}
Let $x=(x_K\ x_L)$ satisfy~\eqref{e:SKL-rep-final}. Then we have $A^*x=x$.
\end{corollary}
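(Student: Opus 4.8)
The plan is to deduce $A^* x = x$ from the fact that the vector $x$ described by~\eqref{e:SKL-rep-final} is a $(K,L)$ $\lambda$-eigenvector, and then to read off the required representation as a combination of columns of $A^*$.

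First I would observe that $x$ of the form~\eqref{e:SKL-rep-final} satisfies $Ax=\lambda x$ together with $x_K\leq\lambda\mathbf{1}_K$ and $x_L\geq\lambda\mathbf{1}_L$. This can be seen without invoking Theorem~\ref{t:mainres} itself, by unwinding the constructions that feed into it: by the choice in~\eqref{e:SKL-rep-final} one has $x_L\in S_L(\lambda)$ and $x_K\in S_K(x_L,\lambda)$, so~\eqref{Keqn} and~\eqref{L1eqn} hold by Propositions~\ref{solvability_3} and~\ref{solvability_4}, while the minimal covering condition built into the choice of $(z_K^W\ z_{\Tilde{L}}^W)$ guarantees~\eqref{L2eqn} by Propositions~\ref{solvability_7} and~\ref{allsolutions}. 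By Proposition~\ref{solvability_2}, equations~\eqref{L1eqn} and~\eqref{L2eqn} together are equivalent to~\eqref{Leqn}, and~\eqref{Keqn} with~\eqref{Leqn} is precisely $Ax=\lambda x$ subject to the sign constraints on $x_K$ and $x_L$.

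The main step is to propagate this eigenvector property into the Kleene star. Using distributivity of $\otimes$ over $\oplus$ (that is, of $\min$ over $\max$), for any vector $w$ one has $A(\lambda w)=\lambda(Aw)$, since component $i$ of either side equals $\bigoplus_j(\lambda\otimes a_{ij}\otimes w_j)$. Combining this with the idempotency $\lambda\otimes\lambda=\lambda$ and with $Ax=\lambda x$, I would prove by induction that $A^k x=\lambda x$ for every $k\geq 1$: the base case is $Ax=\lambda x$, and the inductive step gives $A^{k+1}x=A(A^k x)=A(\lambda x)=\lambda(Ax)=\lambda(\lambda x)=\lambda x$.

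Finally, using the truncation $A^*=I\oplus A\oplus\ldots\oplus A^{n-1}$, I would compute
\begin{equation*}
A^* x = x\oplus\Bigl(\bigoplus_{k=1}^{n-1}A^k x\Bigr)=x\oplus\lambda x = x,
\end{equation*}
where the last equality holds because $(\lambda x)_i=\min(\lambda,x_i)\leq x_i$ for every $i$, so $\lambda x\leq x$ componentwise and the join collapses. In particular $x=A^*x$ exhibits $x$ as a max-min combination of the columns of $A^*$, as claimed. The only genuinely delicate point is the first paragraph, namely confirming $Ax=\lambda x$ directly from the explicit expression~\eqref{e:SKL-rep-final}; once that is in hand, the powers identity and the collapse $x\oplus\lambda x=x$ are routine. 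An alternative, fully computational route would exhibit $x$ directly as $A^*y$ through the block-triangular identities for the Kleene star and then invoke the idempotency $A^*A^*=A^*$, but this is more laborious and less transparent.
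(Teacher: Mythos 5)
Your proof is correct, but it takes the route that the paper explicitly mentions and then deliberately declines to take. The paper's own proof is a \emph{direct} blockwise verification: it notes that $A^*x=x$ is equivalent to $Ax\leq x$, splits the latter into the four inequalities $A_{KK}x_K\leq x_K$, $A_{KL}x_L\leq x_K$, $A_{LK}x_K\leq x_L$, $A_{LL}x_L\leq x_L$, and checks each one from the explicit formulas in~\eqref{e:SKL-rep-final} together with the definitions of $L'$, $\Tilde{L}$ and $L^{>\lambda,L_1}$ --- without ever using the fact that $x$ is a $\lambda$-eigenvector. You instead invoke (the backward direction of) Theorem~\ref{t:mainres} to conclude $Ax=\lambda x$ with $x_K\leq\lambda\mathbf{1}_K$, $x_L\geq\lambda\mathbf{1}_L$, and then observe $\lambda x\leq x$ componentwise, hence $Ax\leq x$ and $A^*x=x$; your induction giving $A^kx=\lambda x$ is valid (min does distribute over max, and $\lambda\otimes\lambda=\lambda$) though slightly more than is needed, since $Ax=\lambda x\leq x$ already yields $A^kx\leq x$ for all $k$ by monotonicity. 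Logically there is no circularity, as Theorem~\ref{t:mainres} is established beforehand, so your argument is a shorter and cleaner deduction. What it does not buy is the point of the corollary as the authors frame it: they introduce it as an independent sanity check on the explicit formula~\eqref{e:SKL-rep-final} (``let us directly show, although it follows from the above\ldots''), and a proof that routes through Theorem~\ref{t:mainres} cannot serve as a check on that theorem's output. So: correct mathematics, genuinely different (and acknowledged-but-avoided) approach, with the trade-off being brevity versus independent verification.
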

\begin{proof}
Since $A^*x=x$ is equivalent to $Ax\leq x$, we will prove the latter. 
Inequality $Ax\leq x$ can be written as the following four:
\begin{equation*}
A_{KK}x_K\leq x_K,\  A_{KL}x_L\leq x_K,\  A_{LK}x_K\leq x_L,\  A_{LL}x_L\leq x_L.
\end{equation*}

The inequality $A_{KK}x_K\leq x_K$ follows since $x_K$, by the second equation of~\eqref{e:SKL-rep-final},
satisfies $(A_{KK})^*v=x_K$ for some $v$ and we have $A_{KK}(A_{KK})^*=(A_{KK})^+\leq (A_{KK})^*$.
As for the inequality $A_{KL}x_L\leq x_K$, we have
\begin{equation*}
%\begin{split}
A_{KL}x_L=\lambda A_{KL}\mathbf{1}_L \oplus A_{K\Tilde{L}}v_{\Tilde{L}}\leq x_K.
%& =\lambda A_{KL}\mathbf{1}_L \oplus A_{KL}\Lambda_{L\Tilde{L}}(z_{\Tilde{L}}^W \oplus \Lambda^W_{\Tilde{L}\Tilde{N}} v_{\Tilde{N}})\\
%&= \lambda A_{KL}\mathbf{1}_L \oplus (A_{KL'}\Lambda_{L'\Tilde{L}}\oplus  A_{K\Tilde{L}}
%\Lambda_{\Tilde{L}\Tilde{L}}) (z_{\Tilde{L}}^W \oplus \Lambda^W_{\Tilde{L}\Tilde{N}} v_{\Tilde{N}})\\
%&= \lambda A_{KL}\mathbf{1}_L \oplus  A_{K\Tilde{L}}
%\Lambda_{\Tilde{L}\Tilde{L}}  (z_{\Tilde{L}}^W \oplus \Lambda^W_{\Tilde{L}\Tilde{N}} v_{\Tilde{N}})\leq x_K.
%\end{split}
\end{equation*}
As for the inequality $A_{LK}x_K\leq x_L$, it follows since $x_L$ and $x_K$ of ~\eqref{e:SKL-rep-final} 
satisfy $x_L\geq \lambda\textbf{1}$ and $x_K\leq\lambda\textbf{1}$.

It remains to prove the inequality $A_{LL}x_L\leq x_L$, which we are going to do for arbitrary 
$x_L$ belonging to the set defined in~\eqref{e:xLsol}, using that this set contains \eqref{e:SKL-rep-L}
and hence any $x_L$ given by the first equation of~\eqref{e:SKL-rep-final}. Denoting 
$L_3=L^{>\lambda,L_1}$ and $L_4=L\backslash L^{>\lambda,L_1}$ 
and seeing that $x_{L_3}=\lambda\mathbf{1}_{L_3}$ and $x_{L_4}\geq\lambda\mathbf{1}_{L_4}$,
we have to prove
\begin{equation*}
\lambda A_{L_3L_3}\textbf{1}_{L_3}\leq \lambda\textbf{1}_{L_3},\  
A_{L_3L_4}x_{L_4}\leq \lambda\textbf{1}_{L_3},\  \lambda A_{L_4L_3} \textbf{1}_{L_3}\leq x_{L_4},\  
A_{L_4L_4}x_{L_4}\leq x_{L_4}.
\end{equation*}
 The first and the third of these inequalities are obvious. As for the inequalities 
$A_{L_3L_4}x_{L_4}\leq \lambda\textbf{1}_{L_3}$ and $A_{L_4L_4}x_{L_4}\leq x_{L_4}$, we observe using the 
definition of $L^{>\lambda,L_1}$~\eqref{e:blambda>} and $L_2$ in~\eqref{e:L1L2} that all entries in 
$A_{LL_4}$ do not exceed $\lambda$, and recall that $x_{L_4}\geq\lambda\textbf{1}_{L_4}$. 
Thus the inequality $A_{LL}x_L\leq x_L$ also follows and the proof is complete.  
\end{proof}

\if{
\begin{proof}
Clearly, every $(K,L)$ $\lambda$-eigenvector is of the form $x=(x_L \  x_K)$ fulfilling~\eqref{Keqn} and~\eqref{Leqn} and by Proposition~\ref{solvability_2}, equation~\eqref{Leqn} is equivalent to~\eqref{L1eqn} (which contains only $x_L$) and~\eqref{L2eqn} (which contains only $x_K$).

Viewing~\eqref{Keqn} as a system with unknown vector $x_K$, we express its solvability 
for a given $x_L$ by condition~\eqref{Keqn-cond}. Therefore, as~\eqref{e:SKL-rep-L} expresses the solution 
set to~\eqref{L1eqn} under~\eqref{Keqn-cond}, the set of all $x_L$ such that $(x_L,x_K)$ is a $(K,L)$
$\lambda$-eigenvector is given by all vectors in $S_L(\lambda)$ of~\eqref{e:SKL-rep-L} for which~\eqref{L2eqn}
is solvable (in view of the dependence of $x_K$ on $x_L$, which is discussed below).   

For a given $x_L\in S_L(\lambda)$ as in~\eqref{e:SKL-rep-L}, the set of all $x_K$ solving~\eqref{Keqn}
is described by $S_K(x_L,\lambda)$ of~\eqref{e:SK}, which is then recast as~\eqref{e:SKL-rep-kk}. 
We now use the description of $S_L(\lambda)$ of~\eqref{e:SKL-rep-L}, 
$S_K(x_L,\lambda)$ of~\eqref{e:SK} and condition~\eqref{L2eqn} to prove the claim. 
For this we first recast~\eqref{L2eqn} as~\eqref{e:finaleq}. The latter condition is of the form $A'z'\oplus b'=\lambda \mathbf{1}$ where $z'=(z_{\Tilde{L}}\ z_K)^T$ and $A'$ and $b'$ are as in~\eqref{e:Abz}. The full solution set of this equation is described by 
Proposition~\ref{solvability_7} and Proposition~\ref{allsolutions}. Substituting this 
solution into~\eqref{e:SKL-rep-L} and~\eqref{e:SK} we obtain the claim. 
\end{proof}
}\fi

Let us now describe the set of $(K,L)$-eigenvectors as a max-min convex hull of a finite number of points.
\begin{corollary}
\label{c:mainres}
If the set of $(K,L)$-eigenvectors associated with eigenvalue $\lambda$ is non-empty, then it is the max-min convex hull of the points $x^{W,0}=(x_K^{W,0}\; x_L^{W,0})$, $x^{W,i}=(x_K^{W,i}\; x_L^{W,i})$ for $i\in \Tilde{L}$ and 
$x^{W,k}=(x_K^{W,k}\; x_L^{W,k})$ for $k\in K$, where
\begin{equation}
\label{e:points}    
\begin{split}
x_L^{W,0}&=\lambda\mathbf{1}_L, \quad x_K^{W,0}=
(A_{KK})^*A_{KL}x_L^{W,0}\oplus (A_{KK})^*_{\lambda}z_K^W,\\ 
 x_L^{W,i}&=\lambda\mathbf{1}_L\oplus e_i,\quad 
 x_K^{W,i}=(A_{KK})^*A_{KL}x_L^{W,i}\oplus (A_{KK})^*_{\lambda}z_K^W,\\
 x_L^{W,k}&=\lambda\mathbf{1}_L,\quad x_K^{W,k}=(A_{KK})^*A_{KL}x_L^{W,k}\oplus (A_{KK})^*_{\lambda}(z_K^W\oplus e_k),
\end{split}    
\end{equation}
and $W$ ranges over all minimal coverings $\{C_j\colon j\in W\}$ of $I_0$.
\end{corollary}
\begin{proof}
Using that $\B^{|\Tilde{N}|}=\B^{|K|}\times \B^{|\Tilde{L}}|$ is a max-min convex set generated by the zero vector $\mathbf{0}$ and unit vectors $e_i$ for $i\in\Tilde{L}$ and $e_k$ for $k\in K$, we substitute these generators for $v_{\Tilde{N}}=(v_{\Tilde{L}}\ v_K)$ in \eqref{e:SKL-rep-final} and obtain that the set of $(K,L)$-eigenvectors is the max-min convex hull of the points described by~\eqref{e:points}. 
\end{proof}

For the purposes of computation note that the minimal solutions of~\eqref{e:finaleq}, which correspond to minimal coverings of $I_0$~\eqref{e:I0new} by unions of $C_j$~\eqref{e:Cjnew}, can be found using the methods described
in Elbassioni~\cite{Elb-08}. The set of all $(K,L)$-eigenvectors can be then efficiently described using Theorem~\ref{t:mainres} and Corollary~\ref{c:mainres}. 

In particular, the number of minimal solutions of $A'z'\oplus b'=\lambda \mathbf{1}$ is equal to the number of minimal coverings $\{\cup C_j\colon j\in W\}$ of $I_0$, which depends on entries of $A$ (particularly $(A_{KK})^*$) and given $\lambda$. However, the dependency of the points generating the set of $(K,L)$-eigenvectors as their max-min convex hull on these minimal coverings seems rather uncertain and can be eliminated if $A_{KK})^*_{\lambda}z_K^W$ is dominated by the sum of other terms in~\eqref{e:SKL-rep-final}. 

In the following example, there is just one possible minimal covering that produces the only minimal solution used in further computation.

%
%Thus,
%\eqref{e:SKL-rep-final} (together with~\eqref{e:SLSK}) provides an explicit expression for the set of
%all $(K,L)$ eigenvectors.
%}}
%
%\begin{equation}
%(z_{\Tilde{L}}^W \oplus M v_{\Tilde{L}})
%(z_{K}^W \oplus M v_{K})
%\end{equation}
%
\begin{example}\normalfont
\label{e:exfinal}
The following three-dimensional example shows how to find the eigenspace for given matrix $A$
and eigenvalue $\lambda$.

Take
\begin{equation*}
A=
\begin{pmatrix}
.1 & .5 & .7\\
0 & .4 & .8\\
.1 & .1 & .5
\end{pmatrix},
\ \lambda = .5
\end{equation*}
First, let us introduce the notation of the vector with interval entries. The vector where entries are of form $a \leq x_1 \leq b$ and $c \leq x_2 \leq d$ is denoted in further text as
\begin{equation*}
x=
\begin{pmatrix}
\langle a,b \rangle \\
\langle c,d \rangle
\end{pmatrix}.
\end{equation*}
As the $\lambda$-eigenspace is a union of background, principal and $(K,L)$ $\lambda$-eigenvectors, we are going to compute the solution for each individual case.

For the case of background $\lambda$-eigenvectors, first, we have to verify the existence of this eigenvector-type. From \eqref{e:backgr-nonempty} we see that this set is nonempty. According to \eqref{e:backgr}, background eigenvectors are all vectors of form
\begin{equation}
x=
\begin{pmatrix}
\langle .5,1 \rangle \\
\langle .5,1 \rangle \\
 .5
\end{pmatrix}
\end{equation}
Their set is the max-min convex hull of 
$(.5, .5, .5),$ $(1, .5, .5),$ 
$(.5, 1, .5)$.

Principal $\lambda$-eigenvectors are computed using Corollary \ref{c:principal}. We find that generators of max-min eigenspace for $x_i \leq \lambda$ for all $i$ are $(.1, .1, .1)$, $(.4, .4, .1)$ and $(.5, .5, .5)$.
Of these vectors, the first one is obviously redundant as $(.1, .1, .1)= .1\otimes  (.5, .5, .5)$. If we want to describe this eigenspace as a max-min convex hull then the zero vector $(0, 0, 0)$ is routinely added to the generating set.

When computing $(K,L)$ eigenvectors, first we have to determine particular $(K,L)$ partition. For partition $K=\{1,3\}$, $L=\{2\}$
we have
\begin{equation*}
A_{KK}=%A_{KK}^+=
\begin{pmatrix}
.1  & .7\\
.1 & .5
\end{pmatrix},
A_{KL}=
\begin{pmatrix}
.5\\
.1
\end{pmatrix},
A_{LK}=
\begin{pmatrix}
0  & .8
\end{pmatrix},
A_{LL}=
\begin{pmatrix}
.4
\end{pmatrix},
\end{equation*}
\begin{equation*}
\ (A_{KK})^*=
\begin{pmatrix}
1  & .7\\
.1 & 1
\end{pmatrix},
\ (A_{KK})_{\lambda}^*=
\begin{pmatrix}
.1  & .5\\
.1 & .5
\end{pmatrix}.
\end{equation*}
We are solving the system
\begin{align}
\label{Keqn1}
\begin{pmatrix}
.1  & .7\\
.1 & .5
\end{pmatrix}
\otimes
\begin{pmatrix}
x_1\\
x_3
\end{pmatrix}
\oplus
\begin{pmatrix}
.5\\
.1
\end{pmatrix}
\otimes
x_2
&=
\begin{pmatrix}
x_1\\
x_3
\end{pmatrix} \\
\label{Leqn1}
\begin{pmatrix}
0  & .8
\end{pmatrix}
\otimes
\begin{pmatrix}
x_1\\
x_3
\end{pmatrix}
\oplus
\begin{pmatrix}
.4
\end{pmatrix}
\otimes
x_2
&= \lambda
\end{align}
By verifying \eqref{Keqn-cond}, we find out that\eqref{Keqn-cond} holds for all values $x_2$, and thus a solution to \eqref{Keqn1} exists.

We can also express the sets $L_1=\emptyset, L_2=\{2 \}, L^{\prime}= \emptyset$ and $\Tilde{L}=\{2 \}$. As $L_2\neq \emptyset$, we need to find the solution set to \eqref{e:finaleq}:
\begin{align}
\label{Keqn2}
.5
\otimes
\begin{pmatrix}
0 & .8
\end{pmatrix}
\otimes
\begin{pmatrix}
1  & .7\\
.1 & 1
\end{pmatrix}
\otimes
\begin{pmatrix}
.5\\
.1
\end{pmatrix}
\otimes
1
\oplus  \hspace{147pt} \nonumber
 \\
\begin{pmatrix}
0 & .8
\end{pmatrix}
\otimes
\begin{pmatrix}
1  & .7\\
.1 & 1
\end{pmatrix}
\otimes
\begin{pmatrix}
.5\\
.1
\end{pmatrix}
\otimes
1
\otimes
z_2
\oplus %\\
\begin{pmatrix}
0 & .8
\end{pmatrix}
\otimes
\begin{pmatrix}
.1  & .5\\
.1 & .5
\end{pmatrix}
\otimes
\begin{pmatrix}
z_1\\
z_3
\end{pmatrix}
=
.5,
\end{align}
which is the same as
\begin{equation}
\label{Leqn2}
.1
\oplus
\begin{pmatrix}
.1 & .1 & .5
\end{pmatrix}
\otimes
\begin{pmatrix}
z_1\\
z_2\\
z_3
\end{pmatrix}
= .5
\end{equation}
The only minimal (and hence the least) solution to this system is $z^{\{3\}}=(0,0,.5)$, so the
solution set is $\{z\colon z^{\{3\}}\leq z\leq 1\}$.
%It corresponds to
%$$
%\Lambda^{\{3\}}=
%\begin{pmatrix}
%1 & 0 & 0\\
%0 & 1 & 0\\
%.5 & .5 & 1
%\end{pmatrix}.
%$$
%
We
can also express $
z_{\tilde{L}}^{\{3\}}=z_L^{\{3\}}=(0)$, $z_K^{\{3\}}=(0,.5)$.

Following Theorem~\ref{t:mainres} and using equations~\eqref{e:SKL-rep-final} we obtain
\begin{equation}
\label{e:final}
\begin{split}
 x_2&=.5\oplus v_2,\\
\begin{pmatrix} x_1 \\ x_3 \end{pmatrix}&=
.5\begin{pmatrix} 1 & .7\\ .1 & 1\end{pmatrix}\begin{pmatrix}.5\\ .1\end{pmatrix}\oplus
\begin{pmatrix} 1 & .7\\ .1 & 1\end{pmatrix}\begin{pmatrix}.5\\ .1\end{pmatrix}v_2\\
&\oplus \begin{pmatrix} .1 & .5\\ .1 & .5\end{pmatrix}\Biggl(\begin{pmatrix} 0\\ .5\end{pmatrix}
\oplus \begin{pmatrix} v_1\\ v_3\end{pmatrix}\Biggl)\\
&=\begin{pmatrix} .5\\ .1\end{pmatrix}(1\oplus v_2)\oplus \begin{pmatrix} .5\\ .5\end{pmatrix}
=
\begin{pmatrix} .5\\ .5\end{pmatrix},
\end{split}
\end{equation}
which can be written as
\begin{equation}
x=
\begin{pmatrix}
.5 \\
\langle .5,1 \rangle \\
 .5
\end{pmatrix}
\end{equation}
For the partition $K=\{2,3\}$, $L=\{1\}$ we are solving the system
\begin{align}
\label{Keqn3}
\begin{pmatrix}
.4  & .8\\
.1 & .5
\end{pmatrix}
\otimes
\begin{pmatrix}
x_2\\
x_3
\end{pmatrix}
\oplus
\begin{pmatrix}
0\\
.1
\end{pmatrix}
\otimes
x_1
&=
\begin{pmatrix}
x_2\\
x_3
\end{pmatrix} \\
\label{Leqn3}
\begin{pmatrix}
.5 & .7
\end{pmatrix}
\otimes
\begin{pmatrix}
x_2\\
x_3
\end{pmatrix}
\oplus
\begin{pmatrix}
.1
\end{pmatrix}
\otimes
x_1
&= \lambda
\end{align}
Similarly to previous procedures we compute eigenvectors for this partition
\begin{equation}
x=
\begin{pmatrix}
\langle .5,1 \rangle \\
.5 \\
 .5
\end{pmatrix}
\end{equation}
%
%Considering any other $(K,L)$ partition we will find out that the solution set is empty. The final solution set is then a union of computed parts: background, principal and $(K,L)$ eigenvectors for $L=\{1\}$ and $L=\{2\}$, see Figure \ref{f:exfinal}. Note however that all $(K,L)$-eigenvectors are also background eigenvectors
%in this case. This is different from Example~\ref{ex:1} where we have $(K,L)$-eigenvectors are
%neither background nor principal.

Thus we see that $(K,L)$-eigenvectors for $L=\{1\}$ and $L=\{2\}$ are background eigenvectors. By similar routine calculations we can check that the same is true for $(K,L)$-eigenvectors for any non-empty $L$, and this is different from Example 3.1, where  we have $(K,L)$-eigenvectors that are
neither background nor principal. Note that in Example 3.3 any set of $(K,L)$-eigenvectors is non-empty, as $(.5,.5,.5)$ is a 
$\lambda$-eigenvector, which is shared between all of them.

\begin{figure}
  \centering
  \includegraphics[width=250pt]{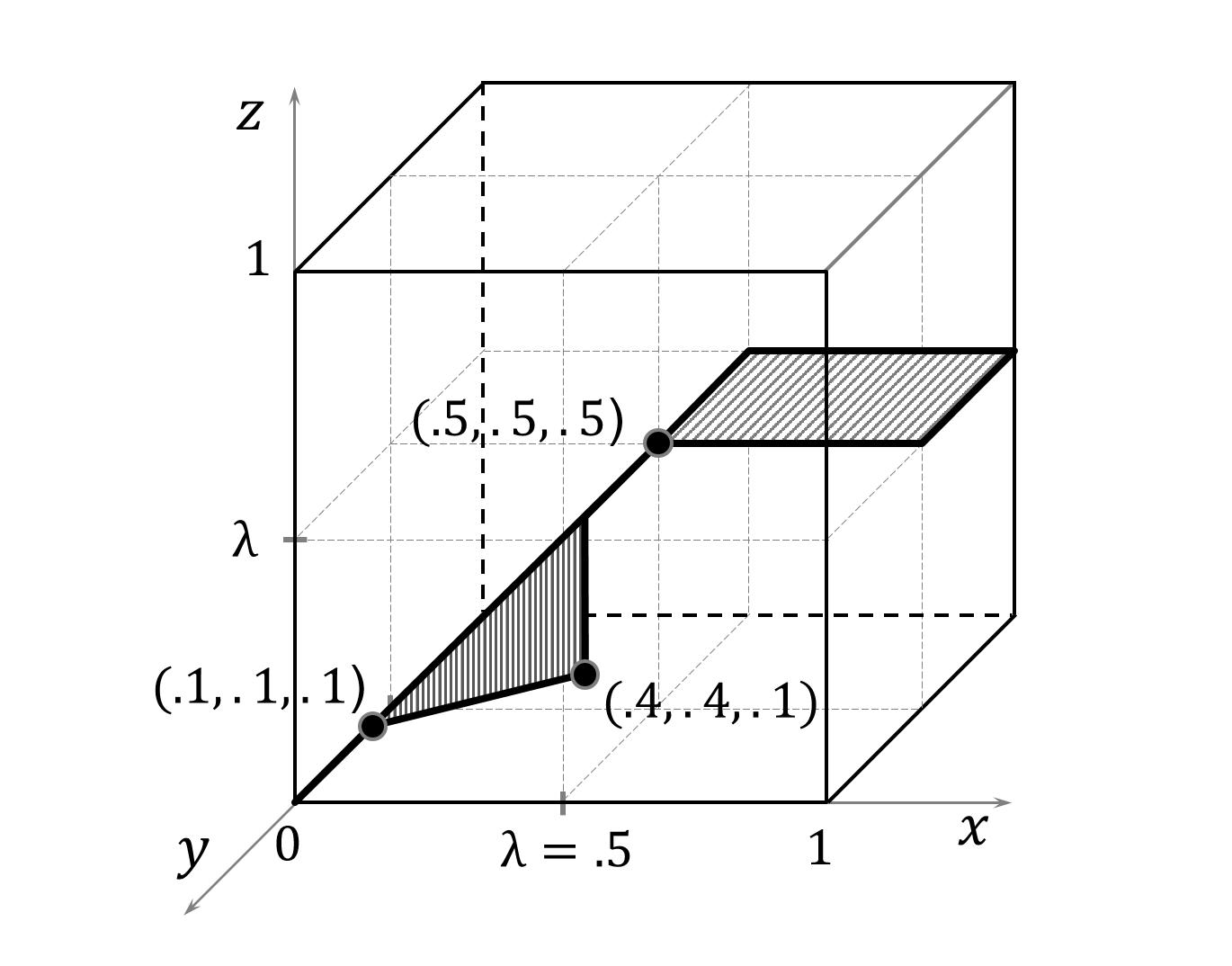}
  \caption{Eigenspace for $A$ and $\lambda$ of Example~\ref{e:exfinal} \label{f:exfinal} }
\end{figure}

The whole $\lambda$-eigenspace, shown on Figure~\ref{f:exfinal}, is thus the union of two parts: principal eigenvectors and background eigenvectors. This eigenspace (as a union of max-min convex hulls which is itself max-min convex) is the max-min convex hull of: $(.5,.5,.5)$, $(1,.5,.5)$, $(.5,1,.5)$, $(.4,.4,.1)$ and $(0,0,0)$. Removing the zero vector we obtain a generating set of the whole $\lambda$-eigenspace, and next we notice that $(.5,.5,.5)$ is redundant, since for example $(.5,.5,.5)=.5\otimes (1,.5,.5)$. Thus the $\lambda$-eigenspace is generated by $(1,.5,.5)$, $(.5, 1, .5)$ and $(.4,.4,.1)$.
\end{example}

\section{Discussion}

Theorem~\ref{t:mainres} and Corollary~\ref{c:mainres} provide us with a description of the sets of $(K,L)$-eigenvectors associated with eigenvalue $\lambda$ and, in particular, give us a finite set of points such that the set of $(K,L)$ max-min eigenvectors appears as the max-min convex hull of them. Collecting such points for all sets of $(K,L)$-eigenvectors we obtain a finite set of generators for the whole $\lambda$-eigenspace. 

There are some problems with this description. Clearly, there are exponentially many partitions $K\cup L=\{1,\ldots, n\}$, and to implement the above described method for finding the generators of $\lambda$-eigenspace one should find a way to quickly eliminate many partitions that are redundant: in two of the three examples that we considered, only the trivial partitions, corresponding to principal and background eigenvectors, were important. The procedure also raises the question about the number of points that generate the $\lambda$-eigenspace. From Example 3.2, it is clear that the number of such points can exceed the dimension, as the eigenspace in this example is generated by $(.4,.2)$, $(.5, 1),$ $(1,.5)$ (e.g., consider it as a ``max-min quadrangle'' between these points and the zero vector, and use the forms of max-min segments given in~\cite{NS-15}), with none of these points being redundant. However, it is not known how quickly the minimal number of such generators can grow with matrix dimension. 

Similar questions can be asked also about the set of $(K,L)$-eigenvectors. The points that generate this set as their max-min convex hull come from certain minimal coverings, whose number may also grow quickly with matrix dimension, unless $L_2=\emptyset$ as in Corollary~\ref{solvability_5}. However, we have seen that the terms coming from these minimal coverings are often dominated by other terms, thus decreasing the number of points making up the max-min convex hull.           

There is a perspective to develop an application of max-min $\lambda$-eigenspaces in medical diagnostics following the ideas of Sanchez~\cite{San-78}. Also, there are still many unresolved questions in the geometry over max-min semiring:  about the generating sets of max-min eigenspaces and more general max-min linear spaces, and more generally, about max-min polytopes.

\section{Acknowledgements}

We thank our anonymous referees for their careful reading and constructive criticism, which 
helped to improve the paper.

%\section*{References}

%\bibliographystyle{alpha}
%\bibliography{mariecurie}

\end{document}